\def\ge{\geqslant}
\def\le{\leqslant}
\def\a{\alpha}
\def\g{\gamma}
\def\G{\Gamma}
\def\D{\Delta}
\def\s{\sigma}
\def\t{\tau}
\def\k{\kappa}
\def\l{\lambda}
\def\z{\zeta}
\def\i{^{-1}}
\def\<{\langle}
\def\>{\rangle}
\newcommand{\bG}{\mathbf G}
\newcommand{{\BG}}{\ensuremath{\mathbb {G}}\xspace}
\newcommand{{\BK}}{\ensuremath{\mathbb {K}}\xspace}
\newcommand{\BQ}{\ensuremath{\mathbb {Q}}\xspace}
\newcommand{\BS}{\ensuremath{\mathbb {S}}\xspace}
\newcommand{\BZ}{\ensuremath{\mathbb {Z}}\xspace}
\newcommand{\CI}{\ensuremath{\mathcal {I}}\xspace}
\newcommand{\CO}{\ensuremath{\mathcal {O}}\xspace}
\DeclareMathOperator{\Aut}{Aut}
\newcommand{\wt}{\text{wt}}
\newcommand{\ra}{\rightarrow}
\def\tW{\tilde W}
\DeclareMathOperator{\Adm}{Adm}
\DeclareMathOperator{\Gal}{Gal}
\newcommand{\wtd}{\widetilde}
\newcommand{\ad}{\text{ad}}
\newtheorem{theorem}{Theorem}
\newtheorem{proposition}[theorem]{Proposition}
\newtheorem{lemma}[theorem]{Lemma}
\newtheorem{corollary}[theorem]{Corollary}
\theoremstyle{definition}
\newtheorem*{acknowledgement}{Acknowledgement}
\newtheorem{remark}[theorem]{Remark}
\numberwithin{equation}{section}
\numberwithin{theorem}{section}
\renewcommand{\to}{%
   \ifbool{@display}{\longrightarrow}{\rightarrow}%
   }
\let\shortmapsto\mapsto
\renewcommand{\mapsto}{%
   \ifbool{@display}{\longmapsto}{\shortmapsto}%
   }
\newlength{\olen}
\newlength{\ulen}
\newlength{\xlen}
\newcommand{\xra}[2][]{%
   \ifbool{@display}%
      {\settowidth{\olen}{$\overset{#2}{\longrightarrow}$}%
       \settowidth{\ulen}{$\underset{#1}{\longrightarrow}$}%
       \settowidth{\xlen}{$\xrightarrow[#1]{#2}$}%
       \ifdimgreater{\olen}{\xlen}%
          {\underset{#1}{\overset{#2}{\longrightarrow}}}%
          {\ifdimgreater{\ulen}{\xlen}%
             {\underset{#1}{\overset{#2}{\longrightarrow}}}
             {\xrightarrow[#1]{#2}}}}%
      {\xrightarrow[#1]{#2}}
   }
\newcommand{\xyra}[2][]{%
   \settowidth{\xlen}{$\xrightarrow[#1]{#2}$}%
   \ifbool{@display}%
      {\settowidth{\olen}{$\overset{#2}{\longrightarrow}$}%
       \settowidth{\ulen}{$\underset{#1}{\longrightarrow}$}%
       \ifdimgreater{\olen}{\xlen}%
          {\mathrel{\xymatrix@M=.12ex@C=3.2ex{\ar[r]^-{#2}_-{#1} &}}}%
          {\ifdimgreater{\ulen}{\xlen}%
             {\mathrel{\xymatrix@M=.12ex@C=3.2ex{\ar[r]^-{#2}_-{#1} &}}}
             {\mathrel{\xymatrix@M=.12ex@C=\the\xlen{\ar[r]^-{#2}_-{#1} &}}}}}%
      {\mathrel{\xymatrix@M=.12ex@C=\the\xlen{\ar[r]^-{#2}_-{#1} &}}}%
   }
\newcommand{\xla}[2][]{%
   \ifbool{@display}%
      {\settowidth{\olen}{$\overset{#2}{\longleftarrow}$}%
       \settowidth{\ulen}{$\underset{#1}{\longleftarrow}$}%
       \settowidth{\xlen}{$\xleftarrow[#1]{#2}$}%
       \ifdimgreater{\olen}{\xlen}%
          {\underset{#1}{\overset{#2}{\longleftarrow}}}%
          {\ifdimgreater{\ulen}{\xlen}%
             {\underset{#1}{\overset{#2}{\longleftarrow}}}
             {\xleftarrow[#1]{#2}}}}%
      {\xleftarrow[#1]{#2}}
   }
\newcommand{\isoarrow}{%
   \ifbool{@display}{\overset{\sim}{\longrightarrow}}{\xrightarrow\sim}%
   }
\begin{document}

\title[On the dimension of some union of affine Deligne-Lusztig varieties]{On the dimension of some union of affine Deligne-Lusztig varieties}

\author{Arghya Sadhukhan}
\address[A. S.]{Department of Mathematics, University of Maryland, College Park, MD 20742}

\keywords{Affine Deligne-Lusztig variety, Newton stratification, dimension formula, Kottwitz set, affine Weyl group}
\email{arghyas0@math.umd.edu}
\subjclass[2010]{20G25,11G25,20F55}

\begin{abstract}
In this paper, we consider certain union $X(\mu,b)$ of affine Deligne-Lusztig varieties in the affine flag variety that arises in the study of mod-$p$ reduction of Rapoport-Zink spaces and moduli spaces of shtukas associated to a connected reductive group. Under a mild hypothesis on $\mu$, but no further restrictions on the group, we compute its dimension in the case where $b$ is the maximal neutrally acceptable element. 
\end{abstract}

\maketitle

\vspace{-0.8cm}
\section{Introduction}
\subsection{Motivation}
A central tool in understanding the reduction modulo $p$ of Shimura varieties, and therefore in the study of arithmetic properties encoded in them, is the Newton stratification. Roughly speaking, at least for the Shimura varieties of Hodge type that admits a moduli interpretation, this stratification on the special fiber (of a suitable integral model) is defined by the loci where the $p$-divisible groups attached to the corresponding abelian varieties have fixed isogeny type. Investigation into the unique closed Newton stratum, the so-called basic locus, in the last decades has found fruitful applications in the realm of the Langlands program. We mention the work toward Zhang's arithmetic fundamental lemma \cite{RTZ13}, as well as the theory of non-archimedean uniformization of Shimura varieties, first pioneered by Cerednik and Drinfeld, and further developed by Rapoport–Zink \cite{RZ}, Fargues \cite{Far}, and Howard–Pappas \cite{HP17}.




The natural range of the index set for the Newton stratification corresponding to a Shimura variety associated to the group-theoretic data $(\bG,\{\mu\})$ is the set $B(\bG, \mu)$ of neutral acceptable elements. Using the classification of the Kottwitz set obtained in \cite{Ko85} and \cite{Ko97}, $B(\bG, \mu)$ can be viewed as a subset of the dominant rational (relative) Weyl chamber.
Thus it becomes a partially ordered set with respect to the usual dominance relation. It is easy to see from the definition that $B(\bG,\mu)$ has a unique minimal element; the basic locus corresponds to this minimal element. A nontrivial result due to He and Nie \cite{HN18} asserts that there is a unique maximal element in $B(\bG,\mu)$; to give an example, when the $\mu$-ordinary locus is nonempty, it is the unique maximal Newton stratum, and then the maximal element of $B(\bG,\mu)$ is $\mu^\diamond$, the twisted Galois-average (see \cref{sec:bgmu}) of a dominant representative in $\{\mu\}$. However, the description of this maximal element is rather mysterious in general.

The primary goal of this article is to investigate the dimension of a certain union of affine Deligne-Lusztig varieties associated to this maximal element. Such a geometric object is the group-theoretic model for the maximal Newton stratum. We remark that in the situations where the He-Rapoport axioms in \cite{HR17} hold, this sought-after dimension is equal to the dimension of the maximal Newton stratum minus the dimension of the corresponding central leaf, see \cite[\S 2.12]{He16}. The first quantity is $\<2\rho,\mu\>$, where $2\rho$ is the sum of the (relative) positive roots, so in that case, we essentially want to compute the latter dimension. The said (union of) affine Deligne-Lusztig varieties can be defined purely in terms of group-theoretic data without any reference to Shimura varieties, which we briefly do in the following subsection. In the remainder of the article, we resort to this latter setup.

\subsection{Main theorem} Let $\bG$ be a connected reductive group over a non-archimedean local field $F$. Denote by $\s$ the Frobenius of $\breve{F}$ over $F$, where $\breve{F}$ is the completion of the maximal unramified extension of $F$. We choose a $\s$-invariant Iwahori subgroup $\breve{I}$ of $\bG(\breve{F})$ . Fix a geometric
conjugacy class $\{\mu\}$ of cocharacters of $\bG$ over the separable closure $\bar{F}$ and choose a dominant representative $\mu$ in it. We denote
by $\Adm(\mu)$ the $\mu$-admissible set, a finite subset of the Iwahori-Weyl group $\wtd{W}$, see \cref{sec:pre} for details. Then for an element $b\in \bG(\breve{F})$, we define following Rapoport \cite{Rap05} the union of affine Deligne-Lusztig varieties 
$$X(\mu,b)=\{g \breve \CI \in \bG(\breve F)/\breve \CI; g \i b \s(g) \in \breve \CI \Adm(\mu) \breve \CI\}.$$
This is the set of geometric points of a subscheme, locally of finite type, of the affine flag variety attached to $\breve{F}$ (in the usual
sense in equal characteristic; in the sense of Zhu \cite{Zhu17} and Bhatt-Scholze \cite{BS17} in mixed characteristics). The interest in such a set comes from the fact that when $F$ is $p$-adic and $\mu$ is minuscule, sets of this form arise as the set of geometric points of the underlying reduced set of a
Rapoport-Zink formal moduli space of $p$-divisible groups, see \cite{RV14}. Something analogous holds
in the function field case for formal moduli spaces of shtukas, where the
minuscule hypothesis can be dropped, cf. \cite{Vi18}. In either case, by He's work on the Kottwitz-Rapoport conjecture in \cite{He16a}, the set $B(\bG,\mu)$ can be characterized as the set of those $\s$-conjugacy classes $[b]$ such that $X(\mu,b)\neq \emptyset$. Our main result is then the following.
\begin{theorem}\label{thm}
Assume that the image $\underline \mu_\ad$ of $\mu$ in $X_*(T_\ad)_{\G_0}$ has depth at least $2$ in every $\bar{F}$-simple factor of $\bG_\ad$. Let $b=b_{\max}$ be the maximal element of $B(\bG,\mu)$. Then
\begin{equation}\label{main}
    \dim X(\mu,b)=\text{rk}_F^{\text{ss}} \bG^* - \text{rk}_F^{\text{ss}} \bG,
\end{equation}
where $\bG^*$ is the unique (up to isomorphism) quasi-split inner form of $\bG$, and $\text{rk}_F^{\text{ss}}$ stands for the semisimple $F$-rank.
\end{theorem}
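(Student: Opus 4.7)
The strategy is to reduce to the case where $\bG$ is adjoint and $\bar{F}$-simple, invoke a known dimension formula for individual affine Deligne-Lusztig varieties $X_w(b)$ indexed by $w \in \Adm(\mu)$, and then carry out a combinatorial computation specific to $b_{\max}$, where the depth hypothesis on $\underline{\mu}_{\ad}$ plays an essential role.

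\textbf{Step 1 (Reductions).} I would first reduce to the case where $\bG$ is adjoint and $\bar{F}$-simple. Both sides of \eqref{main} are additive under the decomposition $\bG_{\ad} = \prod_i \bG_i$ into $\bar{F}$-simple factors; the depth-$\geq 2$ hypothesis on $\underline{\mu}_{\ad}$ is preserved factor by factor; and passing from $\bG$ to $\bG_{\ad}$ changes $X(\mu,b)$ only by a finite cover, so dimensions match. After this, one can work with a single $\bar{F}$-simple adjoint group, classified up to isomorphism by its local Dynkin diagram equipped with a Frobenius action.

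\textbf{Step 2 (Rewrite as a maximum of individual ADLV dimensions).} Using $X(\mu,b) = \bigcup_{w \in \Adm(\mu)} X_w(b)$, the dimension equals $\max_w \dim X_w(b_{\max})$ over admissible $w$ with $X_w(b_{\max}) \neq \emptyset$. Apply the Görtz--He--Nie formula
$$\dim X_w(b) = \tfrac{1}{2}\bigl(\ell(w) + \ell(\eta_\sigma(w)) - \mathrm{def}_\sigma(b)\bigr) - \langle \nu_b, \rho \rangle,$$
so the problem becomes a purely combinatorial one about the affine Weyl group: maximize $\ell(w) + \ell(\eta_\sigma(w))$ over $w \in \Adm(\mu)$ with $X_w(b_{\max}) \neq \emptyset$. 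Using He--Nie's description of $b_{\max}$, one knows $\nu_{b_{\max}}$ and $\mathrm{def}(b_{\max})$ explicitly in terms of Frobenius-orbit data on the simple roots.

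\textbf{Step 3 (Identify $w_{\max}$ and simplify).} The plan is to exhibit a concrete $w_{\max} \in \Adm(\mu)$ realizing this maximum, typically as a translation part indexed by a suitable dominant representative in $\{\mu\}$ composed with a specific product of affine simple reflections chosen in accordance with the Frobenius action on the affine Dynkin diagram. The depth-$\geq 2$ assumption is indispensable: it guarantees that $\Adm(\mu)$ contains sufficiently ``generic'' elements so that such a $w_{\max}$ actually exists and that the upper bound from the dimension formula is attained. One then simplifies the resulting expression via root-system combinatorics, using the Kottwitz identity $\mathrm{def}(b) = \mathrm{rk}_F\bG^* - \mathrm{rk}_F\bJ_b$ to convert defect data into rank differences, verifying that all internal contributions cancel to leave precisely $\mathrm{rk}_F^{\mathrm{ss}}\bG^* - \mathrm{rk}_F^{\mathrm{ss}}\bG$.

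\textbf{Main obstacle.} The hardest step is the combinatorial maximization in Step 3: establishing that the candidate $w_{\max}$ is indeed admissible, that $X_{w_{\max}}(b_{\max}) \neq \emptyset$, and that no other admissible $w$ gives a larger dimension. I expect this to require a case-by-case analysis along the classification of $\bar{F}$-simple adjoint groups via their local Dynkin diagrams with Frobenius action, where for each type one constructs $w_{\max}$ explicitly and matches the length count to the predicted rank difference. The depth hypothesis is precisely what rules out small/sporadic configurations where such an element would fail to lie in $\Adm(\mu)$; dropping it would create anomalous cases in small rank where the clean identity \eqref{main} is no longer expected to hold.
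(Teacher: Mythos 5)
Your Step 1 reduction matches the paper, but from Step 2 onward you diverge substantially from the paper's argument, and the divergence introduces a real gap.

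You propose to apply the general dimension formula $\dim X_w(b)=\tfrac12\bigl(\ell(w)+\ell(\eta_\sigma(w))-\mathrm{def}_\sigma(b)\bigr)-\langle\nu_b,\rho\rangle$ and then maximize $\ell(w)+\ell(\eta_\sigma(w))$ over all $w\in\Adm(\mu)$ with $X_w(b_{\max})\neq\emptyset$. This formula is not unconditionally available at Iwahori level: it is known (via He's work) essentially only for $w$ in the shrunken Weyl chambers, whereas $\Adm(\mu)$ contains many elements far from shrunken. The paper sidesteps this entirely by exploiting a structural fact you do not mention: since $b_{\max}=\max_{x\in W}b_{t^{x\mu}}$ and the \emph{generic} $\sigma$-conjugacy class formula $\dim X_w(b_w)=\ell(w)-\langle 2\rho,\nu_w\rangle$ (He/Milićević--Viehmann) holds for all $w$, any $w$ contributing the top dimension to $X(\mu,b_{\max})$ must satisfy $b_w=b_{\max}$ and hence (by that same formula) must be a translation $t^{x\mu}$. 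So the paper never needs the Görtz--Haines--Kottwitz--Reuman-type formula, and it replaces a maximization over all of $\Adm(\mu)$ by a minimization over $x\in W$ of a single quantity. You also assume that $\nu_{b_{\max}}$ and $\mathrm{def}(b_{\max})$ are ``known explicitly''; in fact the paper flags the explicit identification of $b_{\max}$ as nontrivial and devotes its final section to carrying it out.

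The remaining engine of the paper is also absent from your outline: via the $\sigma$-twisted Demazure power and a reduction to the quasi-split inner form, $\dim X_{t^{x\mu}}(b_{t^{x\mu}})$ is shown to equal the quantum Bruhat graph distance $d(\zeta^{-1}x,\sigma_0(x))$, which is then pinned between $\ell(x^{-1}\zeta\sigma_0(x))$ and the reflection length $\ell_R(x^{-1}\zeta\sigma_0(x))$. The case-by-case work is then about showing that the minimal length elements of the $\sigma_0$-conjugacy class of $\zeta=w_{i,0}w_0$ are partial Coxeter elements, so length and reflection length agree and both equal the orbit-count difference $|\sigma_0\backslash\wtd{\BS}|-|\sigma\backslash\wtd{\BS}|=\mathrm{rk}_F^{\mathrm{ss}}\bG^*-\mathrm{rk}_F^{\mathrm{ss}}\bG$. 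Your proposed combinatorial maximization of $\ell(w)+\ell(\eta_\sigma(w))$ is a much harder and less structured problem, and you give no mechanism comparable to the quantum-Bruhat/reflection-length bracketing for actually closing it. In short: the plan has the right shape (reduce, use a dimension formula, do type-by-type combinatorics) but rests on an inapplicable formula, misses the reduction to translation elements, and lacks the combinatorial framework that makes the minimization tractable.
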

Here \textit{depth} of a dominant cocharacter is an estimate of its distance from the walls of the dominant Weyl chamber, see \cref{product} for a precise definition. We remark that, in particular, $\mu$ cannot be minuscule because of this restriction on its depth. Hence, by way of application in arithmetic geometry, this result is only of interest in the function field setup.  

Let us point out that the quantity on the RHS of \cref{main} is a non-negative integer since every torus in $\bG$ transfers to $\bG^*$, cf. \cite[lemma 2.1]{Lan89}, and it is zero precisely when $\bG$ is quasi-split. On the other hand, by applying Chai's description of $B(\bG,\mu)$ in \cite[\S 7.1]{Ch00}, \cite[lemma 2.5]{HN18}, we can see that the maximal element $b_{\max}$ has Newton point $\mu^\diamond$ exactly when $\bG$ is quasi-split; also cf. \cite[proposition 5.5]{GHN20} for a different argument. As a result, the dimension on the LHS above can be directly checked to be zero in the quasi-split case. Hence, \cref{main} indeed recovers this trivial case (which is valid even in the absence of the depth hypothesis), and as such this theorem also serves as a geometric way to measure how far $\bG$ is from being quasi-split. 

\subsection{Context of the result and outline of the proof}
Let us put \cref{thm} into context. For a quasi-split group $\bG$, a dimension formula on $X(\mu,b)$ for general $b$ was first obtained by He and Yu \cite{HY21} under a 
superregularity hypothesis on the depth of $\mu$. In \cite[theorem C]{Sad21}, the same formula was shown to be valid under the weaker assumption that $\mu$ is regular, i.e., $\text{depth}(\mu)\geq 1$. Let us now place ourselves in the context of a general group $\bG$, and let $b_{\min}$ be the unique minimal (equivalently, basic) element of $B(\bG,\mu)$. Then the work of G{\"o}rtz, He and Rapoport in \cite{GHR22} gives characterizations of when $X(\mu,b_{\min})$ can be of minimal dimension zero, or of maximal dimension $\<2\rho,\mu\>$; the model cases of such phenomena are the Lubin-Tate case and the Drinfeld case, respectively. Let us also remark that in \cite{GHN20}, G{\"o}rtz, He and Nie provide a lower bound for the dimension of $X(\mu,b_{\min})$, and they are also able to classify completely when this becomes an equality. Besides these results, as far as the author is aware, there is no dimension formula available for $X(\mu,b)$, nor is there any conjectural description. Standing at this juncture, it is therefore natural to investigate the dimension problem for the other extremal element of $B(\bG,\mu)$, namely $b_{\max}$.

It is worth pointing out that our dimension formula can also be obtained more directly, using explicit knowledge of the element $b_{\max}$ as discussed in \cref{b-max-compute}. Our method, on the other hand, has the merit of producing all the elements, at least conjecturally, in the Iwahori-Weyl group whose associated Iwahori cells contribute to the maximal Newton stratum; such description would be useful to analyze the (top dimensional) irreducible components of $X(\mu,b_{\max})$ in the non quasi-split setup. 

We now briefly sketch the strategy of the proof. We first identify $b_{\max}$ as the maximum of generic $\s$-conjugacy classes associated with the maximal translation elements in the $\mu$-admissible set. Then we proceed to express such generic $\s$-conjugacy classes in two distinct ways, first in terms of $\s$-twisted Demazure power and then via a reduction to the quasi-split case. Such considerations allow us to express the dimension of individual affine Deligne-Lusztig varieties associated to such generic $\s$-conjugacy classes in terms of certain statistics on the quantum Bruhat graph introduced by Brenti, Fomin and Postnikov \cite{BFP99}. Further analysis in \cref{sec:finish} shows that $\dim X(\mu,b_{\max})$ lies between the length and the reflection length of certain minimal length elements of some Frobenius-twisted conjugacy classes in the finite Weyl group. We then show that such minimal length elements are, in fact, partial Coxeter elements via a case-by-case calculation in \cref{sec:minlength}, and hence their length (which is equal to their reflection length) matches the asserted dimension in \cref{thm}, thereby finishing the proof.

\begin{acknowledgement}
The author thanks Xuhua He for his suggestion to work on this problem and for many useful remarks throughout the course of this work. He would also like to thank Michael Rapoport for his interest in this work and his suggestions on a preliminary draft, and Mita Banik for her help with SageMath computations.
\end{acknowledgement}

\section{Preliminaries}\label{sec:pre}
\subsection{Notations}\label{sec:notation} 

Recall that $\bG$ is a connected reductive group over a non-archimedean local field $F$. Let $\breve F$ be the completion of the maximal unramified extension of $F$ and $\s$ be the Frobenius morphism of $\breve F/F$. The residue field of $F$ is a finite field $\mathbb F_q$ and the residue field of $\breve F$ is the algebraically closed field $\bar{\mathbb F}_q$. We write $\breve G$ for $\bG(\breve F)$. We use the same symbol $\s$ for the induced Frobenius morphism on $\breve G$. Let $S$ be a maximal $\breve F$-split torus of $\bG$ defined over $F$, which contains a maximal $F$-split torus. Let $\mathcal A$ be the apartment of $\bG_{\breve F}$ corresponding to $S_{\breve F}$. We fix a $\s$-stable alcove $\mathfrak a$ in $\mathcal A$, and let $\breve \CI \subset \breve G$ be the Iwahori subgroup corresponding to $\mathfrak a$. Then $\breve \CI$ is $\s$-stable.

Let $T$ be the centralizer of $S$ in $\bG$. Then $T$ is a maximal torus. We denote by $N$ the normalizer of $T$ in $\bG$. The \emph{Iwahori--Weyl group} (associated to $S$) is defined as $$\wtd{W}= N(\breve F)/T(\breve F) \cap \breve \CI.$$ For any $w \in \wtd{W}$, we choose a representative $ \dot{w}$ in $N(\breve F)$; however if there is no possibility of confusion we will call the lift $w$ too. The action $\s$ on $\breve G$ induces a natural action of $\s$ on $\wtd{W}$, which we still denote by $\s$. 

We denote by $\ell$ the length function on $\tW$ determined by the base alcove $\mathfrak a$ and denote by $\tilde \BS$ the set of simple reflections in $\tW$. Let $W_{\text{aff}}$ be the subgroup of $\wtd{W}$ generated by $\tilde \BS$. Then $W_{\text{aff}}$ is an affine Weyl group. Let $\Omega \subset \wtd{W}$ be the subgroup of length-zero elements in $\wtd{W}$. Then $$\wtd{W}=W_{\text{aff}} \rtimes \Omega.$$ Since the length function is compatible with the $\s$-action, the semi-direct product decomposition $\wtd{W}=W_{\text{aff}} \rtimes \Omega$ is also stable under the action of $\s$.  

Let $W=N(\breve F)/T(\breve F)$ be the relative Weyl group. We denote by $\BS$ the subset of $\wtd{\BS}$ consisting of simple reflections generating $W$. We let $\Phi$ (resp. $\D$) denote the set of roots (resp. simple roots) for $W$. We write $\Gamma$ for $\Gal(\bar F/F)$, and write $\Gamma_0$ for the inertia subgroup of $\Gamma$. Then fixing a special vertex of the base alcove $\mathfrak a$, we have the splitting $$\wtd{W}=X_*(T)_{\G_0} \rtimes W=\{t^{\underline \l} w; \underline \l \in X_*(T)_{\G_0}, w \in W\}.$$ Note that if $\bG$ is not quasi-split over $F$, then there does not exist a $\s$-stable special vertex in $\mathfrak a$ and thus the splitting $\wtd{W}=X_*(T)_{\G_0} \rtimes W$ is not $\s$-stable. 

For an irreducible Weyl group $W$ of rank $n$, we follow the labeling of roots as in \cite{Bou} and we usually write $s_i$ instead of $s_{\a_i}$, where $\D=\{\a_i: 1 \leq i \leq n\}$. Let $w_0$ be the longest element in $W$, and for $i \in [1,n]$ we let $w_{i,0}$ be the longest element of the parabolic subgroup of $W$ corresponding to $\D\setminus \{\a_i\}$. Let $\rho$ be the dominant weight with $\<\a^\vee, \rho\>=1$ for any $\a \in \D$. Let $\{\varpi_i^\vee: 1\leq i\leq n\}$ be the set of fundamental coweights. If $\varpi^\vee_i$ is minuscule,
we denote the image of $t^{\underline \varpi_i^\vee}$ under the projection $\wtd{W} \ra \Omega$ by $\t_i$; then conjugation by $\t_i$ is a length preserving automorphism of $\wtd{W}$ which we denote by $\text{Ad}(\t_i)$. 

\subsection{Quantum Bruhat graphs}
We recall the quantum Bruhat graph introduced by Brenti, Fomin, and Postnikov in the context of quantum cohomology ring of complex flag variety, see \cite{BFP99}, also \cite{FGP97}. By definition, a \emph{quantum Bruhat graph} $\G_{\Phi}$ is a directed graph with 
\begin{itemize}
\item vertices given by the elements of $W$; 

\item upward edges $w\rightharpoonup w s_\a$ for some $\a \in \Phi^+$ with $\ell(w s_\a)=\ell(w)+1$;

\item downward edges $w \rightharpoondown w s_\a$ for some $\a \in \Phi^+$ with $\ell(w s_\a)=\ell(w)-\< 2 \rho, \a^\vee\>+1$. 
\end{itemize}

\medskip
The \emph{weight of a path} in $\G_{\Phi}$ is defined to be the sum of weights of the edges in the path. For any $x,y\in W$, we denote by $d(x, y)$ the minimal length among all paths in $\G_{\Phi}$ from $x$ to $y$. Any path between $x$ and $y$ affording $d(x,y)$ as its length is called a \textit{shortest path} between them. Then the following lemma defines a function $\text{wt}: W \times W \ra \BZ_{\geq 0} \Phi^\vee$.

\begin{lemma}\label{wt-x-y} \cite[Lemma 1]{Pos05}, \cite[Lemma 6.7]{BFP99}
Let $x,y\in W$. 
\begin{enumerate}
\item There exists a directed path (consisting of possibly both upward and downward edges) in $\G_{\Phi}$ from $x$ to $y$.

\item Any two shortest paths in $\G_{\Phi}$ from $x$ to $y$ have the same weight, which we denote by $\text{wt}(x, y)$. 

\item Any path in $\G_{\Phi}$ from $x$ to $y$ has weight $\ge \wt(x,y)$.
\end{enumerate}
\end{lemma}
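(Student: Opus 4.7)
The plan is to handle (1) directly, then to derive (2) and (3) simultaneously from a single ``diamond'' (local confluence) lemma for $\G_\Phi$ together with induction on path length.

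For (1), I would first observe that the ordinary (undirected) Bruhat graph embeds into $\G_\Phi$: an ordinary Bruhat cover $w \lessdot ws_\a$ with $\a \in \Phi^+$ and $\ell(ws_\a) = \ell(w)+1$ is by definition an upward edge $w \rightharpoonup ws_\a$, while the reverse cover is a downward edge via a simple reflection (here $\< 2\rho,\a^\vee\> = 2$ when $\a$ is simple, so the length changes by $-1$, matching the definition of a downward edge). Hence, starting from any $x$, one may ascend along a saturated Bruhat chain to the longest element $w_0$, and then descend from $w_0$ along simple reflections to any $y$, producing the required directed path.

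The heart of the argument is (2) and (3), which I would derive from the following local confluence property. Suppose $w \in W$ has two outgoing edges $w \to u_1$ and $w \to u_2$ (of any direction), with associated positive roots $\a,\b$ and weight contributions $m_1 \a^\vee$, $m_2 \b^\vee$ (where $m_i \in \{0,1\}$ according to whether the edge is up or down). The claim to prove is that there exist a common vertex $v$ and paths $u_1 \to v$, $u_2 \to v$ each of length $1$, such that
\begin{itemize}
\item both resulting two-step paths from $w$ to $v$ have the \emph{same} total weight, and
\item both are shortest paths from $w$ to $v$.
\end{itemize}
The proof of this diamond lemma proceeds by a case analysis based on the rank-two subsystem generated by $\a$ and $\b$, combining the tilted Bruhat order machinery of Brenti--Fomin--Postnikov with the explicit behavior of length under reflection in rank-two root systems. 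This is the main technical step; the main obstacle is handling the downward case carefully, where the length drop $\<2\rho,\a^\vee\> - 1$ can be large.

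With the diamond lemma in place, one derives (2) by induction on $d(x,y)$: given two shortest paths from $x$ to $y$, look at their first edges; if they agree, apply induction; if not, apply the diamond lemma to produce a common vertex $v$, then connect both deformed paths via a third shortest path through $v$ to $y$, using induction on the shorter subpaths. Weight invariance is preserved at every step. For (3), I would show more generally that any closed cycle in $\G_\Phi$ has weight lying in $\BZ_{\geq 0}\Phi^\vee$. Equivalently, given an arbitrary path $P$ from $x$ to $y$ of length $\ell(P) > d(x,y)$, an inductive ``straightening'' using the diamond lemma replaces a pair of consecutive edges by a pair along a shortest path, strictly decreasing the length of $P$ while changing its weight by a non-negative element of $\BZ_{\geq 0}\Phi^\vee$. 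Iterating until one reaches a shortest path, the final weight is $\wt(x,y)$ by (2), and the weight of $P$ exceeds it by a non-negative combination of coroots, proving (3).
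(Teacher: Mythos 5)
The paper itself offers no proof of this lemma; it is cited verbatim from Postnikov \cite[Lemma 1]{Pos05} and Brenti--Fomin--Postnikov \cite[Lemma 6.7]{BFP99}. So there is no in-paper argument to compare against, and your proposal must stand on its own.

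Your argument for (1) is correct: every Bruhat cover $w \lessdot ws_\a$ ($\a\in\Phi^+$) is an upward edge, and for simple $\a$ the relation $\langle 2\rho,\a^\vee\rangle=2$ makes $w\rightharpoondown ws_\a$ a downward edge precisely when $\ell(ws_\a)=\ell(w)-1$. Ascending $x\to w_0$ along any saturated Bruhat chain and descending $w_0\to y$ along a reduced word for $y^{-1}w_0$ (using $\ell(w_0 z)=\ell(w_0)-\ell(z)$ at every step) gives the required path.

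For (2) and (3) your general ``rank-two local confluence'' strategy is indeed close in spirit to the real argument, but as written it has two genuine gaps. First, in deriving (2): after applying the diamond lemma to the divergent first edges $x\to u_1$, $x\to u_2$ of two shortest paths, you obtain $v$ with $d(x,v)=2$ and matching two-step weights, but you then assert a ``third shortest path through $v$ to $y$.'' Nothing in your diamond lemma guarantees that $v$ lies on a geodesic from $x$ to $y$ (i.e.\ that $d(v,y)=d(x,y)-2$); without this, the inductive splice does not close. The actual BFP proof controls this via reflection orderings: one shows every shortest path can be deformed into the unique shortest path with strictly increasing edge labels, so the ``target'' $v$ is pinned down to be the right one. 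Second, in deriving (3): your diamond lemma is about two edges \emph{diverging} from a vertex, but you invoke it to replace a pair of \emph{consecutive} edges $p_{i-1}\to p_i\to p_{i+1}$ along a path $P$. That is a different local move and requires its own rank-two analysis. Moreover, the assertion that one can always find a pair of consecutive edges whose replacement ``strictly decreases the length'' is unjustified as stated; a non-shortest path can locally look shortest on each two-step window (you would need something like: take the largest $i$ with $d(x,p_j)=j$ for all $j\le i$, and argue from there). What is really needed to make (3) go through cleanly is a weight triangle inequality $\wt(x,z)\le \wt(\text{edge }x\to u)+\wt(u,z)$, or equivalently the statement that any closed cycle has weight in $\BZ_{\ge 0}\Phi^\vee$, proved by the lexicographic straightening argument, not by the diamond lemma as formulated. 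I would suggest either importing the BFP reflection-ordering machinery explicitly, or strengthening your diamond lemma so that the common vertex $v$ is canonically determined inside the rank-two reflection subgroup and proving the triangle inequality for weights before attempting the induction for (2) and (3).
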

We need the following results, taken from \cite[\S 4.2]{HY21} and \cite[\S 2.5]{HN21}.
\begin{lemma}\label{wt-d}
Let $x,y \in W$. Then
\begin{enumerate}
    \item We have $\<\wt(a,b),\a\> \leq 2$, for any simple root $\a$.
    \item $\ell(y)-\ell(x)=d(x,y)-\<2\rho,\wt(x,y)\>$.
\end{enumerate}

\end{lemma}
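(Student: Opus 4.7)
The plan is to dispatch (2) by a direct count along any shortest path in $\Gamma_\Phi$ and to handle (1) by a more structural argument; the latter is the delicate point.

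For (2), I would fix a shortest path
\[
x = w_0 \to w_1 \to \cdots \to w_k = y
\]
realizing $d(x,y) = k$. By definition of the edges, an upward step contributes $+1$ to the length difference and weight $0$ to the path weight, while a downward step $w_i \to w_i s_{\beta_i}$ contributes $1 - \langle 2\rho, \beta_i^\vee\rangle$ to the length difference and weight $\beta_i^\vee$ to the total. Summing over all $k$ steps and using the fact that $\wt(x,y)$ equals the sum of the coroots of the downward edges (from \cref{wt-x-y}(2)) yields
\[
\ell(y) - \ell(x) = k - \langle 2\rho, \wt(x,y)\rangle = d(x,y) - \langle 2\rho, \wt(x,y)\rangle,
\]
as claimed.

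For (1), I would induct on $d(x,y)$. The base case $x = y$ is immediate since $\wt(x,x) = 0$. In the inductive step, pick a shortest path from $x$ to $y$ and examine its first edge $x \to z$. If it is upward, then $\wt(x,y) = \wt(z,y)$ and induction gives the bound. If it is downward along a coroot $\beta^\vee$, then $\wt(x,y) = \beta^\vee + \wt(z,y)$, and for a simple root $\alpha$ one has
\[
\langle \wt(x,y), \alpha \rangle = \langle \beta^\vee, \alpha\rangle + \langle \wt(z,y), \alpha\rangle.
\]
The induction hypothesis alone only gives $\langle \wt(z,y), \alpha\rangle \leq 2$, so a naive induction does not close — this is the main obstacle.

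The remedy, which is the heart of the argument in \cite{HY21, HN21}, is an exchange/rerouting principle on $\Gamma_\Phi$: if a shortest path from $x$ to $y$ contained too many downward edges whose coroots paired strictly positively with a fixed simple $\alpha$, one could splice in an upward step along $\alpha$ and reroute to obtain a path from $x$ to $y$ of strictly smaller length, contradicting minimality of $d(x,y)$. The verification ultimately reduces to rank-two checks inside the root subsystem spanned by $\alpha$ and the offending coroots, and yields the uniform bound $\langle \wt(x,y), \alpha\rangle \leq 2$. As both statements already appear in the cited sources, it suffices in the write-up to record the lemma and refer to \cite{HY21, HN21} for full details.
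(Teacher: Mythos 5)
Your direct count for part (2) is correct and is the standard argument: every edge of a shortest path contributes $1$ to its length, an upward edge contributes $0$ to the path weight and $+1$ to $\ell$, a downward edge $w \rightharpoondown ws_\beta$ contributes $\beta^\vee$ to the weight and $1-\langle 2\rho,\beta^\vee\rangle$ to $\ell$, and summing over a shortest path together with \cref{wt-x-y}(2) yields the identity. Note that the paper itself does not prove this lemma at all --- it only cites \cite[\S 4.2]{HY21} and \cite[\S 2.5]{HN21} --- so for (2) you actually supply more detail than the paper does.

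For part (1) your proposal does not contain a proof. You correctly observe that the naive induction on $d(x,y)$ fails to close, which is the genuine difficulty, but the ``splice in an upward step along $\alpha$ and reroute'' remedy is asserted, not argued: you do not explain why an excess of downward coroots pairing positively with $\alpha$ yields a shorter path, nor do you carry out the ``rank-two checks'' you invoke, so a reader could not reconstruct the bound from what you write. You then explicitly fall back on citing \cite{HY21,HN21}, which is also exactly what the paper does, so there is no gap \emph{relative to the paper's own treatment}. But be aware that your rerouting sketch is speculative rather than a summary of the cited proofs; if you intend to defer to the literature you should say only that, rather than present a heuristic that might be mistaken for the actual argument.
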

\subsection{Demazure product} \label{product}
We follow \cite[\S 1]{He09}. For any $x, y \in \wtd{W}$, the subset $\{x y'; y' \le y\}$ (resp. $\{x' y; x' \le x\}$, $\{x' y'; x' \le x, y' \le y\}$) of $\wtd{W}$ contains a unique maximal element with respect to the Bruhat order. Moreover, we have $$\max\{x y'; y' \le y\}=\max\{x' y; x' \le x\}=\max\{x' y'; x' \le x, y' \le y\}.$$ We denote this element by $x \ast y$ and we call it the {\it Demazure product} of $x$ and $y$. Then $(\wtd{W}, \ast)$ is a monoid and the Demazure product is determined by the following two rules
\begin{itemize}
    \item $x \ast y = x y$ if $x, y \in \wtd{W}$ such that $\ell(x y) = \ell(x) + \ell(y)$;
    \item $s \ast w = w$ if $s \in \tilde \BS$, $w \in \wtd{W}$ such that $s w < w$.
\end{itemize}

The Demazure product occurs naturally in the convolution product of Iwahori cells in the sense that for any two elements $x,y \in \wtd{W}$, we have $\breve{I}\dot{x}\breve{I} \ast \breve{I}\dot{y}\breve{I}=\breve{I}(\dot{x} \ast \dot{y})\breve{I}$. By such connection, it has also found applications in the study of generic Newton points, cf. \cref{generic-ADLV-dim}. 

An interesting connection between the Demazure product on $\wtd{W}$ and the quantum Bruhat graph of $\Phi_W$ is established in \cite{HN21}. For any dominant $\underline\l \in X_*(T)_{\G_0}$, we define the depth
of $\underline\l$ to be $\text{depth}(\underline\l)=\min \{\<\a,\underline\l\>:\a\in \D\}$.

\begin{theorem}\cite[proposition 3.3]{HN21}\label{dem}
Let $x_1t^{\underline \mu_1}y_1, x_2t^{\underline \mu_2}y_2$ be two elements of $\wtd{W}$ such that the depth of $\underline \mu_1, \underline\mu_2$ is at least $2$. Then $\underline \mu_1+\underline \mu_2-\wt(y_1\i, x_2)$ is dominant, and
$$x_1t^{\underline \mu_1}y_1 \ast x_2t^{\underline \mu_2}y_2=x_1 t^{\underline \mu_1+\underline \mu_2-\wt(y_1\i, x_2)}y_2.$$
\end{theorem}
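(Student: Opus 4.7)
The plan is to reduce the statement to the case $x_1 = y_2 = 1$ via associativity of $\ast$, then induct on the quantum-Bruhat distance $d(y_1\i, x_2)$. For the reduction, the length formula for an element $x t^{\underline \mu} y \in \tW$ with $\underline \mu$ dominant of depth $\geq 2$ gives length-additivity $\ell(x_1 \cdot t^{\underline \mu_1} y_1) = \ell(x_1) + \ell(t^{\underline \mu_1} y_1)$ and symmetrically on the right, so $x_1 \ast (t^{\underline \mu_1} y_1) = x_1 t^{\underline \mu_1} y_1$ and $(x_2 t^{\underline \mu_2}) \ast y_2 = x_2 t^{\underline \mu_2} y_2$; associativity of $\ast$ then strips off the outer factors.

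The base case $d(y_1\i, x_2) = 0$ means $x_2 = y_1\i$ and $\wt(y_1\i, x_2) = 0$; the group-theoretic product is already $t^{\underline \mu_1 + \underline \mu_2}$, and one checks the length identity $\ell(t^{\underline \mu_1} y_1) + \ell(y_1\i t^{\underline \mu_2}) = \ell(t^{\underline \mu_1 + \underline \mu_2})$ by expanding via the standard affine-Weyl length formula $\ell(t^{\underline \nu} w) = \sum_{\a > 0} |\<\a, \underline \nu\> - \chi_{w\i \a < 0}|$, where depth $\geq 2$ ensures every absolute value splits the same way on both sides. For the inductive step, I would fix a shortest path in $\G_\Phi$ from $y_1\i$ to $x_2$ and peel off one edge, say $y_1\i \to y_1\i s_\a$, so that the resulting pair has quantum-Bruhat distance one less and weight differing from the original by $0$ (upward) or $\a^\vee$ (downward). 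Applying the Demazure rule $s \ast w = \max(w, sw)$ in the middle of the expression, combined with the identity $t^{\underline \mu} \ast s_\a = t^{\underline \mu + \a^\vee} s_\a$ in the downward case---valid precisely because depth $\geq 2$ provides the required length-additivity---converts the shortening of the path into the corresponding $\a^\vee$-shift of the middle exponent, closing the induction. Dominance of $\underline \mu_1 + \underline \mu_2 - \wt(y_1\i, x_2)$ follows immediately from \cref{wt-d}(1): $\<\a, \wt\> \leq 2$ against $\<\a, \underline \mu_1 + \underline \mu_2\> \geq 4$.

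The main obstacle is propagating the depth-$\geq 2$ hypothesis through the induction: each peeled downward edge shifts the middle translation by a coroot, and \cref{wt-d}(1) is exactly sharp enough to keep the shifted coweight dominant so that the inductive hypothesis stays valid. The detailed length bookkeeping at each step---verifying that the various affine-Weyl products really do collapse into the asserted Demazure products, with the depth assumption ensuring no unexpected cancellations---is where the technical content of the argument lives.
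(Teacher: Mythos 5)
The paper does not prove this statement; it is imported verbatim from \cite{HN21} (Proposition 3.3 there), so there is no in-paper proof to compare against. Evaluated on its own terms, your reduction via associativity of $\ast$ and the base case $d(y_1^{-1},x_2)=0$ are both correct: with $\underline\mu_1,\underline\mu_2$ dominant of depth $\ge 2$, one has $\ell(t^{\underline\mu_1}y_1)=\langle 2\rho,\underline\mu_1\rangle-\ell(y_1)$ and $\ell(x_2 t^{\underline\mu_2})=\ell(x_2)+\langle 2\rho,\underline\mu_2\rangle$, so when $x_2=y_1^{-1}$ the lengths are additive and the group product $t^{\underline\mu_1+\underline\mu_2}$ is the Demazure product.

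The inductive step, however, contains a genuine gap. The identity you invoke, $t^{\underline\mu}\ast s_\a = t^{\underline\mu+\a^\vee}s_\a$, is false for $\underline\mu$ regular dominant. For any reflection $s_\a$ (simple or not) one has $\ell(t^{\underline\mu}s_\a)=\langle2\rho,\underline\mu\rangle-\ell(s_\a)<\ell(t^{\underline\mu})$, so $t^{\underline\mu}s_\a<t^{\underline\mu}$ and hence $t^{\underline\mu}\ast s_\a = t^{\underline\mu}$; moreover $\ell(t^{\underline\mu+\a^\vee}s_\a)=\langle2\rho,\underline\mu\rangle+\langle2\rho,\a^\vee\rangle-1$ only matches $\ell(t^{\underline\mu})+\ell(s_\a)$ when $\a$ is simple, yet even then the two group elements are different. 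Length-additivity does not rescue the claim. A related structural problem is that edges of $\G_\Phi$ are indexed by reflections $s_\a$ for arbitrary positive roots $\a$, whereas the Demazure rule $s\ast w=\max(w,sw)$ you want to apply ``in the middle'' is only valid for simple $s\in\tilde\BS$. Your outline never explains how to convert a quantum-Bruhat edge at a non-simple reflection into a sequence of simple-reflection Demazure moves, and this is precisely where the difficulty of the statement lives. The natural repair would be to induct on $\ell(y_1)$ or $\ell(x_2)$, peeling a simple reflection off the interface at each step, and then prove a recursion for $\wt(y_1^{-1},x_2)$ under such a simple-reflection peel; as written, the path-peeling mechanism does not close the induction.
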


\subsection{The $\s$-conjugacy classes of $\breve G$} We say that two elements $b, b' \in \Breve{G}$ are $\s$-conjugate if there is some $g\in \Breve{G}$ such that $b'=g b \s(g) \i$. Let $B(\bG)$ be the set of $\s$-conjugacy classes on $\breve G$. By the work of Kottwitz in \cite{Ko85} and \cite{Ko97}, any $\s$-conjugacy class $[b]$ is determined by two invariants: 
\begin{itemize}
	\item The Kottwitz point $\k([b]) \in \pi_1(\bG)_{\G}$, the set of $\G$-coinvariants of the Borovoi fundamental group $\pi_1(\bG)$; 
	
	\item The Newton point $\nu([b]) \in ((X_*(T)_{\Gamma_0, \BQ})^+)^{\langle\sigma\rangle}$, the set of $\<\s\>$-invariants of the intersection of $X_*(T)_{\Gamma_0}\otimes \BQ=X_*(T)^{\Gamma_0}\otimes \BQ$ with the set $X_*(T)_\BQ^+$ of dominant elements in $X_*(T)_\BQ$.
\end{itemize}

We denote by $\le$ the dominance order on $X_*(T)_\BQ^+$, i.e., for $\nu, \nu' \in X_*(T)_\BQ^+$, we have $\nu \le \nu'$ if and only if $\nu'-\nu$ is a non-negative (rational) linear combination of positive coroots over $\breve F$. The dominance order on $X_*(T)_\BQ^+$ extends to a partial order on $B(\bG)$. Namely, for $[b], [b'] \in B(\bG)$, we say that $[b] \le [b']$ if and only if $\k([b])=\k([b'])$ and $\nu([b]) \le \nu([b'])$. 

Denote by ${\bf J}_b$ the  $\s$-centralizer group of $b$; this is an algebraic group over $F$ with $F$-rational points given by
\begin{equation*}\label{scentrali}
{\bf J}_b(F)=\{g\in \breve{G}\mid g^{-1}b\s(g)=b \}.
\end{equation*}
For any reductive group $\mathbf{H}$ over $F$, we denote by $\text{rk}_F^{\text{ss}} \mathbf{H}$ the semisimple $F$-rank of $\mathbf{H}$. The following result is implicit in \cite[\S 1.9]{Ko06}.

\begin{proposition}\label{rk}
Let $\bG$ be quasi-simple over $F$ and assume $\t \in \Omega$. Then $$\text{rk}_F^{\text{ss}} \mathbf{J}_{\dot \t}=\mid \text{Ad} \t \circ \s \text{ orbits on } \wtd{\BS} \mid -1.$$
\end{proposition}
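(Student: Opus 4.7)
The plan is to reduce the claim to Kottwitz's description of $\mathbf J_b$ for basic $b$ combined with the standard Bruhat--Tits dictionary relating local Dynkin diagrams to the semisimple $F$-rank. Since $\t\in\Omega$ has length zero, $\dot\t$ represents a basic class in $B(\bG)$---its Newton point is central---so by the classification in \cite{Ko97} (as recalled in \cite[\S 1.9]{Ko06}), $\mathbf J_{\dot\t}$ is an inner form of $\bG$. Moreover the absolute group $(\mathbf J_{\dot\t})_{\breve F}$ is canonically identified with $\bG_{\breve F}$ in such a way that the Frobenius on $\mathbf J_{\dot\t}$ becomes the twisted Frobenius $\text{Ad}(\dot\t)\circ\s$ on $\bG_{\breve F}$.

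The first step is to observe that since $\t\in\Omega$, its lift $\dot\t$ normalizes $\breve\CI$, and hence the twisted Frobenius stabilizes the base alcove $\mathfrak a$. It therefore induces an automorphism of $\wtd W$ preserving $\tilde\BS$, and at the level of $\tilde\BS$ this automorphism is exactly $\text{Ad}(\t)\circ\s$. The second step invokes the standard Bruhat--Tits description of the semisimple $F$-rank via local Dynkin diagrams: for a connected reductive $F$-group $\mathbf H$ that is quasi-simple over $\breve F$, the vertex set of the absolute local Dynkin diagram of $\mathbf H$ is canonically $\tilde\BS$, the relative local Dynkin diagram over $F$ is obtained by taking Frobenius-orbits on this set, and the latter is itself the affine Dynkin diagram of the irreducible relative root system. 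Consequently its vertex set has cardinality $\text{rk}_F^{\text{ss}}\mathbf H+1$.

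Combining these two steps for $\mathbf H=\mathbf J_{\dot\t}$ yields
\[
|\text{Ad}(\t)\circ\s\text{-orbits on }\tilde\BS|=\text{rk}_F^{\text{ss}}\mathbf J_{\dot\t}+1,
\]
which is the asserted equality. The main subtle point, and essentially the only real content beyond citing standard facts, is in step one: translating the abstract Kottwitz inner twist into the concrete combinatorial $\text{Ad}(\t)\circ\s$-action on $\tilde\BS$. This ultimately reduces to unraveling the splitting $\wtd W = N(\breve F)/(T(\breve F)\cap\breve\CI)$ and checking that inner conjugation by $\dot\t\in N(\breve F)$ descends to the automorphism $\text{Ad}(\t)$ of $\wtd W$, which is a straightforward bookkeeping exercise once one observes that $\dot\t$ normalizes both $T(\breve F)$ and $\breve\CI$.
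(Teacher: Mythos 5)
Your argument is correct and is essentially the natural unpacking of the citation the paper itself gives; the paper offers no proof, stating only that the result is ``implicit in [Ko06, \S 1.9],'' so there is no competing argument to compare against. Your two steps — (i) $\dot\t$ is basic, $\mathbf J_{\dot\t}$ is the inner twist of $\bG$ by $\text{Ad}(\dot\t)$, and since $\dot\t$ normalizes $\breve\CI$ the twisted Frobenius stabilizes $\mathfrak a$ and acts on $\wtd\BS$ as $\text{Ad}(\t)\circ\s$; (ii) the Bruhat--Tits fact that the relative local Dynkin diagram over $F$ has vertex set equal to the set of Frobenius-orbits on the absolute local Dynkin diagram $\wtd\BS$, so the vertex count is $\text{rk}^{\text{ss}}_F+1$ — are both correct and give the statement.

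One small point worth tightening: in step (ii) you phrase the Bruhat--Tits dictionary for $\mathbf H$ ``quasi-simple over $\breve F$,'' whereas the proposition only assumes $\bG$ (hence $\mathbf J_{\dot\t}$) quasi-simple over $F$. If $\bG$ is $F$-simple but splits into several $\breve F$-simple factors permuted by Frobenius (Weil restriction along an unramified extension), $\wtd\BS$ is a disjoint union of affine diagrams. The count still works — the relative root system over $F$ is irreducible, the $F$-alcove is a simplex of dimension $\text{rk}^{\text{ss}}_F$, and the Frobenius-orbit set on $\wtd\BS$ is exactly its vertex set — but your stated hypothesis doesn't literally cover this case. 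This is cosmetic given how the paper applies the proposition (always to irreducible affine diagrams), but the hypothesis in your step (ii) should be ``quasi-simple over $F$'' to match the proposition as stated.
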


\subsubsection{The straight $\s$-conjugacy classes}
Note that the action of $\s$ on $\breve{G}$ gives rise to an action on $\wtd{W}$, still denoted by $\s$. The set of $\s$-conjugacy classes in $\wtd{W}$ is denoted by $B(\wtd{W},\s)$.

Let $w\in \wtd{W}, n \in \mathbb{N}$. The \textit{$n$-th $\s$-twisted power of $w$} is defined by $w^{\s,n}=w\s(w)\s^2(w)\cdots \s^{n-1}(w)$. Then by definition, $w$ is called a \textit{$\s$-straight element} if $\ell(w^{\s,n})=n\ell(w)$, for all $n \in \mathbb{N}$. A $\s$-conjugacy class of $\wtd{W}$ is called straight if it contains a straight element, and we denote the collection of such straight $\s$-conjugacy classes by $B(\wtd{W},\s)_{\text{str}}$. We have a map $\Psi: B(\bG) \ra B(\wtd{W},\s)$, coming from the assignment $w \ra \dot{w}$. The importance of the straight $\s$-conjugacy class comes from the following result.
\begin{theorem}\cite[theorem 3.7]{He14}
The restriction of $\Psi$ induces a bijective map $\Psi: B(\wtd{W})_{\text{str}} \to B(\bG)$. Moreover, We have the following commutative diagram \[B(\wtd{W})_{\text{str}}\xymatrix{ \ar[rr]^{\Psi} \ar[dr]_{(\k,\nu)} & & B(\bG) \ar[ld]^-{(\k,\nu)} \\ & \pi_1(\bG)_{\s} \times ((X_*(T)_{\Gamma_0, \BQ})^+)^{\langle\sigma\rangle} &}\]
\end{theorem}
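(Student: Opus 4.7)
The plan is to establish the bijectivity of $\Psi$ on straight classes in tandem with the invariant compatibility, since the two arguments are intertwined. First I would verify that $w \mapsto [\dot w]$ descends to a well-defined map $B(\wtd W, \s) \to B(\bG)$: if $w' = x w \s(x)^{-1}$ in $\wtd W$, then for compatible lifts one has $\dot{w}' = \dot{x}\, \dot{w}\, \s(\dot{x})^{-1} t$ with $t \in T(\breve F) \cap \breve{\CI}$, and a Lang-theorem argument over the residue field absorbs $t$ into a further $\s$-conjugation, so $[\dot{w}'] = [\dot{w}]$ in $B(\bG)$.

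Next I would verify the invariant compatibility on straight classes. For $w$ a $\s$-straight element, write $w^{\s,n} = t^{\underline{\mu}_n} v_n$ in the splitting $\wtd{W} = X_*(T)_{\G_0} \rtimes W$. Straightness forces $\ell(w^{\s,n}) = n\,\ell(w)$, which prevents cancellation between the translation and finite parts, so $\underline{\mu}_n / n$ stabilizes in $X_*(T)_{\G_0, \BQ}$; after moving to the dominant chamber this stable value coincides with $\nu([\dot w])$ because $\dot w$ satisfies a Rapoport-Zink decency relation (a sufficiently high power of $\dot w$ equals an explicit translation times a bounded element). The Kottwitz invariant is immediate from the factorization $\wtd W = W_{\text{aff}} \rtimes \Omega$ and the identification $\Omega \cong \pi_1(\bG)_{\G_0}$, whose $\s$-coinvariants give $\pi_1(\bG)_\G$.

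For surjectivity onto $B(\bG)$, the Bruhat-Tits decomposition $\breve G = \bigsqcup_w \breve{\CI}\, \dot w\, \breve{\CI}$ produces a surjection $B(\wtd W, \s) \twoheadrightarrow B(\bG)$. To refine this to straight classes I would invoke the theory of minimal length elements in $\s$-conjugacy classes of $\wtd W$: every such class contains a minimal length element, and a sequence of $\s$-cyclic shifts conjugates it into a \emph{fundamental} (equivalently, $\s$-straight) element, possibly within a different class that nevertheless has the same image in $B(\bG)$. Injectivity then follows from the invariant computation above together with Kottwitz's classification: the pair $(\kappa, \nu)$ determines the class in $B(\bG)$, and one checks directly that the same pair determines a straight $\s$-conjugacy class in $\wtd W$ by explicit parametrization.

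The main obstacle is the reduction to a straight representative within the surjectivity step: proving that any minimal length element of a $\s$-conjugacy class of $\wtd W$ can be $\s$-conjugated to a straight element through $\s$-cyclic shifts. This requires a delicate analysis of how simple reflections interact with minimal length elements in affine Weyl groups, and it is the technical heart of the argument; everything else is either formal manipulation of invariants or an appeal to the standard Bruhat-Tits machinery.
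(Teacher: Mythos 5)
The theorem in question is cited from He's 2014 Annals paper (\cite[Theorem~3.7]{He14}); the present paper does not reprove it, so there is no internal proof to compare against. Your reconstruction has the correct high-level skeleton (well-definedness via Lang's theorem, the Newton point via twisted powers and a decency relation, surjectivity via the Iwahori decomposition plus a reduction argument, injectivity via the classification of straight classes), and you correctly identify that reducing an arbitrary class to a straight representative is the crux.

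There is, however, a genuine gap in the surjectivity step. You write that a sequence of $\s$-cyclic shifts takes a minimal length element to a $\s$-straight one, ``possibly within a different class.'' But a $\s$-cyclic shift $w \mapsto s w \s(s)$ for $s \in \wtd\BS$ \emph{is} a $\s$-conjugation inside $\wtd W$, so it never leaves the $\s$-conjugacy class of $\wtd W$; and the minimal length elements of a class $\CO$ are straight if and only if $\CO$ is itself a straight class. So for a non-straight class, no amount of cyclic shifting produces a straight element. The missing ingredient is the affine Deligne--Lusztig reduction at the level of $\breve G$: for $w$ of minimal length in its class and $s \in \wtd\BS$ with $sw < w < sw\s(s)$, every element of $\breve\CI \dot w \breve\CI$ is $\s$-conjugate in $\breve G$ to an element of $\breve\CI \dot{(sw)} \breve\CI$ or of $\breve\CI \dot{(sw\s(s))} \breve\CI$. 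Iterating this length-decreasing move together with the length-preserving cyclic shifts terminates exactly at straight elements, and it genuinely changes the $\wtd W$-class along the way, which is why the bijection is with straight classes rather than with all of $B(\wtd W,\s)$. Your proposal as written only has one of the two moves. Relatedly, the injectivity step leans on the ``explicit parametrization'' of straight classes by $(\k,\nu)$, which is itself a nontrivial theorem of He rather than a direct verification; it deserves at least the same prominence you gave the reduction step.
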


\subsubsection{The Newton poset} Given $w \in \wtd{W}$, define $$B(\bG)_w:=\{[b] \in B(\bG): [b] \cap \breve{I} w \breve{I} \neq \emptyset\}.$$ 
It is easy to see that $B(\bG)_w$ has an unique maximal element $b_w$, which coincides with the generic $\s$-conjugacy class in $\breve{I}w\breve{I}$. In the case of a split group, an explicit formula for the Newton point of this maximal element was first obtained in \cite{Mil21} under certain superregularity hypothesis on the dominant cocharacter associated with $w$. We need the following strengthened formula for the elements in shrunken chambers in the context of quasi-split groups, obtained in \cite{HN21}. We will write $\nu_w$ to denote $\nu(b_w)$.

\begin{theorem}\cite[proposition 3.1]{HN21}\label{newton}
Suppose that $\bG$ is quasi-split and adjoint over $F$. Let $x, y \in W$ and $\underline\mu \in X_*(T)_{\G_0}^+$ be such that $\text{depth}(\underline\mu)\geq 2$. Then $\nu_{x t^{\underline\mu} y}$ is the average of $\s$-orbit of $\underline\mu-\wt(y \i, \s(x))$. 
\end{theorem}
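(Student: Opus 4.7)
The plan is to read off $\nu_w$ for $w:=xt^{\underline\mu}y$ from the iterated $\s$-twisted Demazure powers
\[
w^{*\s,n}:=w\ast\s(w)\ast\cdots\ast\s^{n-1}(w),
\]
by first computing them explicitly via \cref{dem}, and then invoking the principle that for sufficiently large $n$ the element $w^{*\s,n}$ is $\s$-straight, in which case its Newton point is both directly read off from its translation part and equals $n\cdot\nu_w$.

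The explicit computation goes by induction on $n$: I would show
\[
w^{*\s,n}=x\,t^{\lambda_n}\,\s^{n-1}(y), \qquad \lambda_n=\sum_{i=0}^{n-1}\s^i(\underline\mu)-\sum_{i=0}^{n-2}\s^i\!\bigl(\wt(y\i,\s(x))\bigr),
\]
with $\lambda_n$ dominant of depth at least $2$. The inductive step is immediate from \cref{dem} applied to $w^{*\s,n}\ast\s^n(w)=(xt^{\lambda_n}\s^{n-1}(y))\ast(\s^n(x)\,t^{\s^n(\underline\mu)}\,\s^n(y))$, using the $\s$-equivariance $\wt(\s(a),\s(b))=\s(\wt(a,b))$, which holds because quasi-splitness forces $\s$ to act on $W$ as a Dynkin diagram automorphism, hence as an automorphism of the quantum Bruhat graph. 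Preservation of depth $\geq 2$ is a telescoping estimate: for any simple root $\a$, \cref{wt-d}(1) combined with the $\s$-invariance of the set of simple roots yields $\<\a,\lambda_n\>\geq 2n-2(n-1)=2$.

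Rewriting
\[
\lambda_n=\sum_{i=0}^{n-1}\s^i\!\bigl(\underline\mu-\wt(y\i,\s(x))\bigr)+\s^{n-1}\!\bigl(\wt(y\i,\s(x))\bigr)
\]
shows that $\tfrac{1}{n}\lambda_n$ converges to the $\s$-orbit average of $\nu':=\underline\mu-\wt(y\i,\s(x))$, with equality already when $n$ is a multiple of the order of $\s$ on $X_*(T)_{\G_0}$. The remaining task is to identify this limit with $\nu_w$. For large $n$, all simple root pairings $\<\a,\lambda_n\>$ grow linearly, which forces $\ell(w^{*\s,n})$ and the lengths of all its $\s$-twisted powers to split additively in $\<2\rho,\lambda_n\>$, $\ell(x)$, and $\ell(\s^{n-1}(y))$; this is precisely what is needed to conclude that $w^{*\s,n}$ is $\s$-straight. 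The Newton point of a $\s$-straight element being the $\s$-average of its translation part, combined with the identity $\nu_{w^{*\s,n}}=n\cdot\nu_w$, then yields $\nu_w=\s\text{-avg}(\nu')$.

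The main obstacle is this last identity $\nu_{w^{*\s,n}}=n\cdot\nu_w$. Heuristically it is clear from the cocycle perspective: if $b\in\breve\CI\dot w\breve\CI$ is generic, then $b\,\s(b)\cdots\s^{n-1}(b)\in\breve\CI\dot w^{*\s,n}\breve\CI$ is also generic, and the Newton invariant of such a $\s$-twisted product is exactly $n$ times that of $b$. Making this rigorous requires the theory of generic Newton points under convolution of Iwahori cells, together with He's characterization of straight $\s$-conjugacy classes as those whose Newton points are realized directly by the translation part. With these inputs in place, the proof above goes through and depth $\geq 2$ enters exactly to keep \cref{dem} applicable at every step of the induction.
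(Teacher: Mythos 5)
Your computation of the iterated Demazure power and the preservation of depth are both correct: the inductive application of \cref{dem} together with the $\s$-equivariance of $\wt(\cdot,\cdot)$ does give
\[
w^{*\s,n}=x\,t^{\lambda_n}\,\s^{n-1}(y),\qquad
\lambda_n=\sum_{i=0}^{n-1}\s^i(\underline\mu)-\sum_{i=0}^{n-2}\s^i\bigl(\wt(y\i,\s(x))\bigr),
\]
with $\lambda_n$ dominant of depth $\geq 2$, and $\lambda_n/n$ indeed converges to the $\s$-average of $\underline\mu-\wt(y\i,\s(x))$. The problem is entirely in the last step, where you want to identify this limit with $\nu_w$, and the mechanism you propose does not work.

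First, it is false that $w^{*\s,n}$ becomes $\s$-straight for large $n$. Take $W$ of type $A_1$, $\underline\mu=\a^\vee$, $x=s$, $y=1$, $\s$ trivial. Then $\wt(1,s)=0$, so $w^{*\s,n}=st^{n\a^\vee}$, and $(st^{n\a^\vee})^2=t^{-n\a^\vee}t^{n\a^\vee}=1$, so $\ell\bigl((w^{*\s,n})^{\s,2}\bigr)=0\neq 2\ell(w^{*\s,n})$ for every $n$. The heuristic you give (that the simple-root pairings $\<\a,\lambda_n\>$ grow linearly) is also incorrect: from \cref{wt-d}(1), each term of the negative sum can equal $2$, so if $\<\a,\underline\mu\>=2$ and $\<\a,\wt(y\i,\s(x))\>=2$ the pairing $\<\a,\lambda_n\>$ stays bounded (equal to $2$). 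So neither straightness nor linear growth is available, and the sentence ``this is precisely what is needed to conclude that $w^{*\s,n}$ is $\s$-straight'' does not hold. You flag the identity $\nu_{w^{*\s,n}}=n\cdot\nu_w$ yourself as the main obstacle; as written, the paragraph that follows is a hope, not a proof.

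The gap can be closed with a different argument, which is essentially what the paper does in the analogous non-quasi-split situation (\cref{cochar=}). Write $\bar\nu$ for the $\s$-average of $\underline\mu-\wt(y\i,\s(x))$; it is dominant, being a limit of the dominant $\lambda_n/n$. Because $\lambda_n$ is dominant and regular, the length formula gives $\ell(w^{*\s,n})=\ell(x)+\<2\rho,\lambda_n\>-\ell(y)$, and \cref{generic-ADLV-dim} then yields the scalar identity $\<2\rho,\nu_w\>=\<2\rho,\bar\nu\>$ --- but only the scalar one. To upgrade, one needs a one-sided inequality $\nu_w\leq\bar\nu$; then, since $2\rho$ pairs strictly positively with every positive coroot, the scalar identity forces $\nu_w=\bar\nu$. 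The inequality comes from \cref{str}: choose a $\s$-straight $w_s\leq w$ with $\nu(w_s)=\nu_w$; then $w_s^{\s,n}=w_s^{*\s,n}\leq w^{*\s,n}$ by monotonicity of the Demazure product, and applying the order-preserving dominant projection $\mathrm{pr}$ (fact (a) in the paper's proof of \cref{cochar=}) gives $\mathrm{pr}(w_s^{\s,n})\leq\lambda_n$; dividing by $n$ and letting $n\to\infty$ yields $\nu_w\leq\bar\nu$. This is the missing input; without it, the argument establishes only the equality of pairings with $2\rho$, not the statement itself.
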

\subsubsection{Neutrally acceptable elements}\label{sec:bgmu}
Let $\{\mu\}$ be a conjugacy class of cocharacters over $\bar{F}$. Choose $\mu$ be a dominant representative of $\{\mu\}$ and denote by $\underline{\mu}$ its image in $X_*(T)_{\G_0}$. We define the \textit{$\mu$-admissible set}
\begin{equation*}
\Adm(\mu)=\{w \in \wtd{W};\ w \le t^{x(\underline \mu)} \text{ for some }x \in W\}.
\end{equation*}
We also have the set of \textit{neutrally acceptable elements}
\begin{equation*}
B(\bG, \mu)=\{ [b]\in B(\bG)\mid \kappa([b])=\mu^\natural, \nu([b])\leq \mu^\diamond \} .
 \end{equation*}
Here $\mu^\natural$ denotes the common image of $\mu\in\{\mu\}$ in $\pi_1(\bG)_{\s}$, and $\mu^\diamond$ denotes the average of the $\s_0$-orbit of $\underline \mu$. Here $\s_0$ denotes the $L$-action of $\s$, see \cite[definition 2.1]{GHN20} for details. The set $B(\bG, \mu)$ inherits a partial order from $B(\bG)$. Since the Kottwitz map $\k$ is constant on $B(\bG, \mu)$, we may view it as a subset of $X_*(T)_{\G_0,\BQ}$ via the
Newton map. The following description of $B(\bG, \mu)$ is obtained in \cite[theorem 1.1 \& lemma 2.5]{HN18}. To state the result, for each $\s_0$ orbit $\CO$ on $\BS$, we set $\varpi_{\CO}=\sum\limits_{i \in \CO} \varpi_i$.

\begin{theorem}\cite{HN18}\label{b-max}
(1) Let $v \in X_*(T)_{\G_0,\BQ}$. Then $v \in B(\bG, \mu)$ if and only if $\s_0(v)=v$ is dominant, and for any $\s_0$-orbit $\CO$ on $\BS$ with $\<v, \a_i\> \neq 0$ for each (or equivalently, some) $i \in \CO$, we have that $\<\mu+\s(0)-v, \varpi_{\CO}\> \in \BZ$ and $\<\mu-v, \varpi_{\CO}\> \geq 0$.

(2) The set $B(\bG, \mu)$ contains a unique maximal element.
\end{theorem}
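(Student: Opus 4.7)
My plan is to establish part (1) as a direct translation of the Kottwitz parametrization of $B(\bG)$ into linear inequalities on the $\s_0$-invariant rational cocharacter space, and then deduce part (2) by constructing the maximum explicitly from the resulting description.

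For part (1), I would begin with the Kottwitz classification: a $\s$-conjugacy class $[b] \in B(\bG)$ is determined by the pair $(\k([b]), \nu([b])) \in \pi_1(\bG)_\s \times ((X_*(T)_{\G_0,\BQ})^+)^{\<\s\>}$, and such a pair comes from a genuine $[b]$ precisely when $\k$ and $\nu$ have the same image in $\pi_1(\bG)_{\s,\BQ}$. Thus $v \in B(\bG,\mu)$ amounts to three conditions: $v$ is dominant and $\s_0$-invariant; $v \le \mu^\diamond$; and $v$ lifts to $\mu^\natural \in \pi_1(\bG)_\s$. The first is built into the statement. For the dominance inequality, since $\s_0$ permutes the fundamental weights within an orbit $\CO$, a direct computation gives $\<\mu^\diamond, \varpi_\CO\> = \<\mu, \varpi_\CO\>$, so the dominance-order comparison $v \le \mu^\diamond$ reduces to $\<\mu - v, \varpi_\CO\> \ge 0$ for each orbit $\CO$. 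The restriction to orbits with $\<v, \a_i\> \ne 0$ is harmless, because for the remaining orbits the inequality is forced by dominance of $\mu$.

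The main work is the Kottwitz-matching condition. I would use the exact sequence $0 \to \BZ\Phi^\vee \to X_*(T) \to \pi_1(\bG) \to 0$, pass to $\G_0$-coinvariants, and then test equality of $\k$-classes by pairing against the $\varpi_\CO$ that span the character lattice of $\pi_1(\bG)_\G$. The shift $\s(0)$ enters because when $\bG$ is not quasi-split the base alcove need not contain a $\s$-fixed special vertex, and the splitting $\wtd W = X_*(T)_{\G_0} \rtimes W$ is not $\s$-equivariant; the cocycle correcting this defect is precisely $\s(0)$. One then shows that the integrality $\<\mu + \s(0) - v, \varpi_\CO\> \in \BZ$ is exactly the condition for $v$ to lift to $\mu^\natural$, and again only the orbits $\CO$ on which $v$ is supported yield nontrivial constraints.

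For part (2), granted the description from (1), I would construct $v_{\max}$ directly. The candidate is obtained from $\mu^\diamond$ by an iterative procedure: for each $\s_0$-orbit $\CO$ where the integrality $\<\mu + \s(0) - \mu^\diamond, \varpi_\CO\> \in \BZ$ fails, project onto the wall $\<v, \a_i\> = 0$ for $i \in \CO$, and repeat with the new element in place of $\mu^\diamond$. Since the defining inequalities of $B(\bG,\mu)$ only constrain orbits where $\<v, \a_i\> \ne 0$, zeroing an orbit removes integrality constraints rather than creating new ones, so the procedure terminates and produces an element of $B(\bG,\mu)$. Any other $v \in B(\bG,\mu)$ must already vanish on the orbits we zeroed (else the failed integrality would reappear for it), so $v \le v_{\max}$ on the nose; uniqueness of the maximum is immediate. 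The main obstacle will be the Kottwitz bookkeeping in part (1), in particular pinning down the exact role of $\s(0)$ in the integrality condition and verifying that the restriction to orbits $\CO$ with $\<v, \a_i\> \ne 0$ captures precisely the nonredundant constraints; once this is clean, part (2) is essentially combinatorial.
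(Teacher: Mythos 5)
You are reconstructing a statement that the paper itself only quotes from [HN18] (indeed, Section 5 of the paper notes that He--Nie establish part (2) by reducing to the superbasic case), so the question is whether your argument stands on its own. For part (1), your skeleton is the standard and correct one (Kottwitz's classification, with the integrality detected on the Levi centralizing $v$), but two points need repair: the inequality $\<\mu-v,\varpi_\CO\>\ge 0$ on the orbits with $\<v,\a_i\>=0$ is \emph{not} ``forced by dominance of $\mu$'' (the term $-\<v,\varpi_\CO\>$ can be arbitrarily negative); it follows from a convexity lemma using dominance of both $v$ and $\mu^\diamond$, the inequalities on the regular orbits, and $\mu^\diamond-v\in\BQ\Phi^\vee$. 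And the genuinely hard bookkeeping --- that lifting to $\mu^\natural$ is exactly the condition $\<\mu+\s(0)-v,\varpi_\CO\>\in\BZ$ on the regular orbits --- is announced rather than carried out.

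The serious gap is in part (2). Projecting $\mu^\diamond$ onto the walls of every orbit where integrality fails produces an element of $B(\bG,\mu)$, but typically a far-from-maximal one: the correct fix at a failing orbit $\CO$ is to subtract a small \emph{fractional} multiple of $\sum_{i\in\CO}\a_i^\vee$ so that $\<\mu+\s(0)-v,\varpi_\CO\>$ becomes integral while $v$ stays regular on $\CO$; zeroing the orbit is a drastic over-correction. Concretely, take $\bG$ adjoint of type $A_n$ with $\s=\mathrm{Ad}(\t_1)$ and $\mu\in\BZ\Phi^\vee$ deep in the chamber: then $\s(0)=\varpi_1^\vee$ and $\<\mu+\varpi_1^\vee-\mu^\diamond,\varpi_j\>=1-\tfrac{j}{n+1}\notin\BZ$ for every $j$, so your procedure zeroes every orbit and returns $v=0$, which is the \emph{basic} (minimal) element; the actual maximum is $\mu-\varpi_n^\vee=\mu-\sum_j\tfrac{j}{n+1}\a_j^\vee$, which is regular (this is the case $\Xi_\s=\sum_j\{\tfrac{ij}{n+1}\}\a_j^\vee$ tabulated in Section 5 of the paper). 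Correspondingly, your comparison step fails: another $v\in B(\bG,\mu)$ need not vanish on an orbit where $\mu^\diamond$ violates integrality, since $v$ may differ from $\mu^\diamond$ by a non-integral multiple of the orbit's coroots and thereby satisfy the integrality condition itself. The existence of a unique maximum is nontrivial precisely because candidates supported on different faces of the chamber must be compared; this is why He--Nie pass to the superbasic case rather than argue orbit by orbit.
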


\subsection{Affine Deligne-Lusztig varieties}
The \emph{affine Deligne-Lusztig variety} associated to  $w \in \tW$ and $b \in \breve G$ is a locally closed subscheme of the affine flag variety of $\bf G$, locally of finite type over $\overline{\mathbb F}_p$ and of finite dimension, with the set of geometric points given by
\begin{equation*}
X_w(b)(\overline{\mathbb F}_p)=\{g \breve I \in \breve G/\breve I\mid g \i b \s(g) \in \Breve{I} w \Breve{I}\}.
\end{equation*}
If $F$ is of equal characteristic, then by affine flag variety we mean the ``usual'' affine flag variety; in the case of mixed characteristic, this notion should be understood in the sense of perfect schemes, as developed by Zhu~\cite{Zhu17} and by Bhatt and Scholze~\cite{BS17}.

We now define
\begin{equation*}
X(\mu,b)=\bigcup\limits_{w \in \Adm(\mu)} X_w(b).
\end{equation*}
Settling the Kottwitz-Rapoport conjecture made in \cite{KR03} and \cite{Rap05} about the non-emptiness pattern for $X(\mu,b)$, He proves the following result in \cite{He16}.
\begin{theorem}\label{KR-conjecture}\cite[theorem A]{He16a}
$X(\mu,b) \neq \emptyset$ if and only if $[b] \in B(\bG,\mu)$.
\end{theorem}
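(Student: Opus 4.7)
The forward implication $X(\mu,b) \neq \emptyset \Rightarrow [b] \in B(\bG,\mu)$ is the classical half and essentially reduces to admissibility bounds. If $g\i b \s(g) \in \breve I w \breve I$ for some $w \in \Adm(\mu)$, then $w \le t^{x(\underline\mu)}$ for some $x \in W$, and projection to $\pi_1(\bG)_\G$ forces $\k([b]) = \mu^\natural$, since the Kottwitz invariant is constant on the $W$-orbit of $\underline\mu$ and stable under the Bruhat order in $\wtd W$. The Newton bound $\nu([b]) \le \mu^\diamond$ follows from Mazur-type arguments, comparing the Iwahori cell $\breve I w \breve I$ with the affine Grassmannian cell $\breve K t^{\underline\mu} \breve K$ for a hyperspecial (or special) parahoric $\breve K$ containing $\breve I$.

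For the nontrivial converse direction, the plan is to show that each $[b] \in B(\bG,\mu)$ is realized as the generic $\s$-conjugacy class $b_w$ of some Iwahori cell with $w \in \Adm(\mu)$; this immediately gives $X_w(b) \ne \emptyset$, hence $X(\mu,b) \ne \emptyset$. First, using the bijection $\Psi : B(\wtd W)_{\text{str}} \to B(\bG)$, pick a $\s$-straight representative $w_{[b]} \in \wtd W$ of the class $[b]$; for such an element, the identity coset manifestly lies in $X_{w_{[b]}}(\dot{w}_{[b]})$, so the nontrivial content lies in placing $w_{[b]}$, or a $\s$-conjugate of the same length, inside $\Adm(\mu)$. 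The matching between straight representatives of classes in $B(\bG,\mu)$ and the $\mu$-admissible set is the heart of the proof: one writes a straight representative as $x t^{\underline\lambda} y$, and uses the Newton bound $\nu([b]) \le \mu^\diamond$ together with $\k([b]) = \mu^\natural$ to force $\underline\lambda$, up to an appropriate Weyl-adjustment, to be dominated by $\underline\mu$, which in turn yields the Bruhat inequality $w_{[b]} \le t^{x'(\underline\mu)}$.

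A more flexible implementation uses the class polynomial formalism: to each pair $(w, [b])$ one attaches a class polynomial $f_{w,[b]} \in \BZ[v-1]$ satisfying $X_w(b) \neq \emptyset \iff f_{w,[b]} \neq 0$, and these polynomials obey length-reduction rules compatible with $\s$-twisted conjugation. One then proves the stronger non-vanishing $\sum_{w \in \Adm(\mu)} f_{w,[b]} \neq 0$ for each $[b] \in B(\bG,\mu)$ by descending via the reduction rules to a minimal-length (hence $\s$-straight) representative, where the resulting contribution can be computed directly. The main obstacle in either implementation is reconciling the inequality-based description of $B(\bG,\mu)$ with the Bruhat-order description of $\Adm(\mu)$; this requires careful use of partial $\s$-conjugation to navigate between minimal-length elements in a single $\s$-conjugacy class of $\wtd W$, and, most delicately, a reduction to the quasi-split case via inner forms, since the splitting $\wtd W = X_*(T)_{\G_0} \rtimes W$ is $\s$-stable only in the quasi-split setting.
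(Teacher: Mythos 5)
First, a point of comparison: the paper does not prove this statement. It is imported verbatim as Theorem A of \cite{He16a}, so there is no internal proof to measure yours against; I can only assess your outline against He's published argument. Your overall architecture is the right one and does match what He actually does: the forward direction via constancy of the Kottwitz invariant on the coset $W_{\text{aff}}\t$ containing $\Adm(\mu)$ together with the group-theoretic Mazur inequality of Rapoport--Richartz (comparing $\breve I w \breve I$ with a special parahoric double coset), and the converse via $\s$-straight representatives, for which the base point lies in $X_{w_{[b]}}(\dot w_{[b]})$ and $\Psi$ identifies $[\dot w_{[b]}]$ with $[b]$. The class-polynomial formalism you mention is indeed the engine (via the Deligne--Lusztig reduction method) that He uses to organize the non-emptiness statement.

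The gap sits exactly where you yourself locate the heart of the proof, and it is a genuine one rather than a presentational omission. Writing a straight representative as $x t^{\underline\l} y$ and invoking $\nu([b])\le\mu^\diamond$ to obtain a dominance inequality on (a Weyl conjugate of) $\underline\l$ does not yield the Bruhat inequality $x t^{\underline\l} y \le t^{x'(\underline\mu)}$: dominance of translation parts characterizes membership in $\Adm(\mu)$ only for pure translations, and for elements with nontrivial finite parts $x,y$ the implication fails in general. Moreover, the Newton point of a straight element $x t^{\underline\l} y$ is not simply the averaged dominant conjugate of $\underline\l$ unless the element has a special form (cf.\ the role of \cref{newton} in this paper, which requires shrunken-chamber and depth hypotheses), so the inequality you would extract from $\nu([b])\le\mu^\diamond$ is not directly an inequality about $\underline\l$. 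Closing this requires the actual content of \cite{He16a}: a reduction to adjoint $F$-simple groups and to the quasi-split inner form, followed by a nontrivial combinatorial theorem producing, inside each straight $\s$-conjugacy class of $\wtd W$ whose image lies in $B(\bG,\mu)$, a minimal-length element belonging to $\Adm(\mu)$ --- established there by explicit analysis, not by the dominance-order shortcut you describe. As written, your proposal is an accurate road map of He's proof, but the step it defers is precisely the one that made the Kottwitz--Rapoport conjecture a theorem.
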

Note that by \cref{KR-conjecture}, we have $$B(\bG,\mu) = \bigcup\limits_{w \in \Adm(\mu)} B(\bG)_w,$$ hence $b_{\max}=\max\limits_{w \in \Adm(\mu)}b_w$. Since $w \mapsto [b_w]$ is an increasing function $\wtd{W} \ra X_*(T)_{\G_0,\BQ}$, we conclude that $$b_{\max}=\max\limits_{x\in W}b_{t^{x(\mu)}}.$$

\section{Explicating $b_{\max}$ via generic Newton point}\label{case-free}
\subsection{Dimension of a generic ADLV}\label{sec:gendim}
Following \cite{He21}, we define the \textit{$n$-th $\s$-twisted Demazure power of $w$} by setting $$w^{\s,n}=w*\s(w)*\s^2(w)*\cdots *\s^{n-1}(w).$$
We need the following result, which on the one hand describes the dimension of a single affine Deligne-Lusztig variety associated with generic $\s$-conjugacy class, and on the other hand quantifies such generic $\s$-conjugacy class in terms of $\s$-twisted Demazure power. For the first equality below, see \cite[theorem 2.23]{He16} and \cite[lemma 3.2]{MV21}, whereas for the second equality, see \cite[theorem 0.1]{He21}.  
\begin{theorem}\label{generic-ADLV-dim}
Let $w \in \wtd{W}$. Then $\dim X_w(b_w)=\ell(w)-\<2\rho,\nu_w\>=\ell(w)-\lim\limits_{n \ra \infty} \frac{\ell(w^{*_\s,n})}{n}$.
\end{theorem}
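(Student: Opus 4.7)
The plan is to treat the two equalities independently, each reducing to an existing proof in the literature.

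For the first equality $\dim X_w(b_w)=\ell(w)-\langle 2\rho,\nu_w\rangle$, I would decompose the Iwahori double coset $\breve{I}\dot{w}\breve{I}$ by $\sigma$-conjugacy strata. By semicontinuity, the generic class $[b_w]$ corresponds to the unique open stratum. Computing the orbit structure of the $\sigma$-conjugation action on this open locus, and using the standard identification of the dimension of the non-$X_w(b_w)$ direction with the Kottwitz invariant $\langle 2\rho,\nu_{b_w}\rangle$ (coming from $\dim \bJ_{b_w}$ modulo the compact part), one arrives at the asserted formula. This reasoning is carried out in \cite[theorem 2.23]{He16} and reformulated in \cite[lemma 3.2]{MV21}, so the role of my exposition would be chiefly to assemble the ingredients in the form the rest of the paper requires.

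For the second equality, the strategy is to exploit the compatibility of the Demazure product with convolution of Iwahori double cosets, $\overline{\breve{I}\dot{x}\breve{I}\cdot\breve{I}\dot{y}\breve{I}}=\overline{\breve{I}(x\ast y)\breve{I}}$. Iterating, a generic $g\in \breve{I}\dot{w}\breve{I}$ produces a generic element $g\,\sigma(g)\cdots\sigma^{n-1}(g)$ in $\breve{I}\dot{w}^{\ast_\sigma,n}\breve{I}$, so the generic $\sigma$-conjugacy class $[b_{w^{\ast_\sigma,n}}]$ satisfies $\nu_{w^{\ast_\sigma,n}}=n\nu_w$ by the additivity of the Newton point under $\sigma$-twisted powers. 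Applying the first equality to $w^{\ast_\sigma,n}$ then yields
$$0\leq \dim X_{w^{\ast_\sigma,n}}(b_{w^{\ast_\sigma,n}}) = \ell(w^{\ast_\sigma,n})-n\langle 2\rho,\nu_w\rangle,$$
and division by $n$ followed by $n\to\infty$ gives $\langle 2\rho,\nu_w\rangle \leq \lim_n \ell(w^{\ast_\sigma,n})/n$ (the limit exists by subadditivity of $n\mapsto \ell(w^{\ast_\sigma,n})$ in the Demazure monoid).

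The reverse inequality, which is the main obstacle, is the content of \cite[theorem 0.1]{He21}. My approach would be to reduce to a $\sigma$-straight representative $\tau$ of the $\sigma$-conjugacy class attached to $w$ via the map $\Psi$ of Section 2.4. For such $\tau$ one has $\tau^{\ast_\sigma,n}=\tau^{\sigma,n}$ and $\ell(\tau^{\sigma,n})=n\ell(\tau)=n\langle 2\rho,\nu_\tau\rangle$ on the nose, so the limit formula becomes trivial. The reduction rests on two inputs: (i) the invariance of $\nu_w$ under $\sigma$-conjugation by simple reflections, which is standard from the Kottwitz classification; and (ii) the bound $|\ell(w^{\ast_\sigma,n})-\ell(\tau^{\ast_\sigma,n})|=O(1)$ in $n$, which requires a careful analysis of how the $\sigma$-twisted Demazure iteration interacts with He's reduction tree for $\sigma$-conjugation. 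Establishing this uniform $O(1)$ comparison is where the real technical difficulty lies and is the step I would expect to require the most work.
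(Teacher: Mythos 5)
The paper proves this theorem entirely by citation---to \cite[theorem 2.23]{He16} and \cite[lemma 3.2]{MV21} for the first equality, and to \cite[theorem 0.1]{He21} for the second---and your plan rests on the same three references, so the overall route is the one the paper takes. Your supplementary sketches, however, have two issues worth flagging. In the ``easy'' direction, the claim $\nu_{w^{*_\sigma,n}}=n\nu_w$ should be the dominance inequality $\nu_{w^{*_\sigma,n}}\geq n\nu_w$: the map $b\mapsto b\sigma(b)\cdots\sigma^{n-1}(b)$ is not surjective onto $\breve I\dot w^{*_\sigma,n}\breve I$, so one only learns that the maximal Newton point of the big coset dominates $n\nu_w$. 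Since your chain of inequalities uses only the $\geq$ direction, the conclusion survives; but you should state what you actually use.

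The sketch of the reverse inequality has a more substantive gap. Your item (i), ``the invariance of $\nu_w$ under $\sigma$-conjugation by simple reflections,'' is false as stated: $\nu_w$ is an invariant of the Iwahori double coset $\breve I\dot w\breve I$, not of the $\sigma$-conjugacy class of $w$ in $\wtd W$, and $\nu_{s w\sigma(s)}\neq\nu_w$ in general --- constancy of $\nu$ holds only along length-non-increasing cyclic shifts (He's reduction method). More fundamentally, the straight element $w_s$ controlling $\nu_w$ is produced as a Bruhat predecessor $w_s\leq w$ with $\nu(w_s)=\nu_w$ (see Lemma~\ref{str}), not as a $\wtd W$-$\sigma$-conjugate of $w$; so the plan to ``reduce $w$ to $\tau$ by $\sigma$-conjugation'' within $\wtd W$ is not an available move, and there is no reduction tree in $\wtd W$ that lands you on $\tau$. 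The relation $w_s\leq w$ gives only $\ell(w_s^{*_\sigma,n})=n\langle 2\rho,\nu_w\rangle\leq\ell(w^{*_\sigma,n})$, i.e.\ the easy direction again. The needed upper bound $\ell(w^{*_\sigma,n})\leq n\langle 2\rho,\nu_w\rangle+O(1)$ is precisely the content of \cite{He21}, and your sketch does not yet supply a mechanism for it beyond acknowledging the difficulty.
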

Note that there can be elements $w\in \wtd{W}$ that are not pure translations but still satisfy $b_w=b_{\max}$; this can be seen e.g. using the Deligne-Lusztig reduction method, see \cite[proposition 4.2]{He14}. However, if $w$ contributes to top dimensional components - i.e., $\dim X(\mu,b_{\max})=\dim X_w(b_{\max})$ - then $w$ is necessarily a translation element of the form $t^{x\underline\mu}$ for some $x\in W$, by the first equality in \cref{generic-ADLV-dim}.

Now, again by an application of the first equality in \cref{generic-ADLV-dim} we see that $\dim X_{t^{x\underline\mu}}(b_{t^{x\underline\mu}})$ is a decreasing function of $b_{t^{x\underline\mu}}$; therefore, finding top dimensional $X_w(b_{\max})$ inside $X(\mu,b_{\max})$ boils down to understanding elements $x\in W$ such that $\dim X_{t^{x\underline\mu}}(b_{t^{x\underline\mu}})$ is minimized, and this minimum value of the dimension is indeed the dimension of $X(\mu,b_{\max})$. We approach the problem of computing $\dim X_{t^{x\underline\mu}}(b_{t^{x\underline\mu}})$ below in two different ways. In the rest of the paper, we omit the underline for simplicity and write $\mu$ instead of $\underline\mu$ throughout.

\subsection{A standard reduction} Let $\bG$ be a connected reductive group over $F$, and let $\bG_{\ad}$
be its adjoint group. Let $T_\ad$ be the image of $T$ in $\bG_\ad$, and denote by $\mu_\ad$ the image of $\mu$ in $X_*(T_\ad)_{\G_0}$. Similarly
for any $b \in \Breve{G}$, we denote by $b_\ad$ its image in $\Breve{G}_\ad$. By \cite[proposition 4.10]{Ko97}, we then have an isomorphism of posets $$B(\bG,\mu) \ra B(\bG_\ad,\mu_\ad), \text{~via~} [b] \mapsto [b_\ad].$$ Next, we have a decomposition
$\bG_\ad \simeq \bG_1 \times \cdots \times \bG_r$,
where each $\bG_i$
is adjoint and simple over F.  Write $\mu = (\mu_1,\cdots,\mu_r)$, where $\mu_i$
is a dominant coweight of $\bG_i$.
This way we can identify $$B(\bG_\ad,\mu)=\prod B(\bG_i,\mu_i).$$ We also have $$\dim X^\bG(\mu,b)=\dim X^{\bG_\ad}(\mu_\ad,b_\ad)=\sum \dim X^{\bG_i}(\mu_{\ad,i},b_{\ad,i}).$$
Hence, it suffices to focus on adjoint $F$-simple groups. We work with the absolute
local Dynkin diagram (i.e., the affine Dynkin diagram attached to $\bG$ over $\Breve{F}$), together with the diagram automorphism induced by Frobenius. 

We can describe such groups in terms of tuples $(\wtd{W}, \s)$, where $\wtd{W}$ is the associated Iwahori-Weyl group with its finite root datum being irreducible of adjoint type, and $\s=\text{Ad} (\t  \s_0)$ is a length preserving automorphism of $\wtd{W}$ with $\t \in \Omega, \s_0(\BS)=\BS$. Thus $\text{Ad} \t$ (resp. $\text{Ad}\s_0$) corresponds to a symmetry of the affine (resp. finite) Dynkin diagram. More concretely, $\t$ can be taken to be $\tau_i=t^{\varpi_i^\vee}w_{i,0}w_0$ whenever $\varpi_i^\vee$ is a minuscule fundamental coweight, and $\s_0$ can be nontrivial only when $W$ is of type $A_n, D_n (n \geq 5)$ and $E_6$. Throughout \cref{case-free} and \cref{b-max-compute}, we denote by $\z$ the finite Weyl group part of $\t$, i.e. $\z=\z_i:=w_{i,0}w_0$ for some minuscule fundamental coweight $\varpi_i^\vee$ - often suppressing the label $i$ as well. We henceforth assume that $\text{depth}(\mu) \geq 2$.
\subsection{via Demazure power}\label{sec:dem-power} Here we express $\<2\rho, \nu_{t^{x(\mu)}}\>$ explicitly using Demazure power of $t^{x(\mu)}$. 
\subsubsection{}
To ease notational burden, we separately discuss the case with trivial $\s_0$ first. Suppose that $o(\z)=m$. Then we have
\begin{equation*}
\begin{aligned}
       (t^{x\mu})^{*_\sigma,m+1}
       & =(x t^\mu x^{-1})*(\z x t^\mu x^{-1}\z^{-1})*\cdots *(\z^{m-1}xt^\mu x^{-1}\z^{1-m})*(xt^\mu x\i)\\
       & =xt^{\mu_1}x\i, \text{~with~} \mu_1:=(m+1)\mu-\sum\limits_{i=0}^{m-1} \wt(\z^ix,\z^{i+1}x).
\end{aligned}
\end{equation*}
Here the second equality follows from a repeated application of \cref{dem}. It is now easy to see that for any positive integer $k$, we have $$(t^{x\mu})^{*_\sigma,km+1}=xt^{\mu_k}x\i, \text{~with~} \mu_k:=(km+1)\mu-k\sum\limits_{j=0}^{m-1} \wt(\z^jx,\z^{j+1}x).$$ Note that $\mu_k$ is dominant under the depth hypothesis on $\mu$ and we have $\frac{1}{km+1}\ell((t^{x\mu})^{*_\sigma,km+1})=\<2\rho, \mu\>-\frac{k}{km+1}\<2\rho, \sum\limits_{j=0}^{m-1} \wt(\z^jx,\z^{j+1}x)\>$. Since the limit in \cref{generic-ADLV-dim} exists, we deduce 
\begin{equation*}\label{lim-cal1}
    \lim\limits_{n \ra \infty} \frac{1}{n}\ell((t^{x\mu})^{*_\s,n})=\lim\limits_{k \ra \infty} \frac{1}{km+1}\ell((t^{x\mu})^{*_\sigma,km+1})=\<2\rho, \mu\>-\frac{1}{m}\<2\rho, \sum\limits_{j=0}^{m-1} \wt(\z^jx,\z^{j+1}x)\>.
\end{equation*}
Thus by \cref{generic-ADLV-dim}, $\dim X_{t^{x\mu}}(b_{t^{x\mu}})=\frac{1}{m}\<2\rho, \sum\limits_{j=0}^{m-1} \wt(\z^jx,\z^{j+1}x)\>$.
\subsubsection{}
Now we consider the general case, where $\s_0$ may be nontrivial. Then $$\s^j(t^{x\mu})=\text{Ad}((\z \s_0)^j)(t^{x\mu}))=\text{Ad}(\z^{\s_0,j})(t^{\s_0^j(x\mu)}),$$ where we recall $\z^{\s_0,j}=\z\s_0(\z)\s_0^2(\z)\cdots \s_0^{j-1}(\z)$. Let $m'$ denote the order of $\z \s_0$ inside $W \rtimes \<\s_0\> \subset \Aut(W)$; since the group is adjoint, this is also the order of $\s$ as an automorphism of the subset of translation elements in $\wtd{W}$.

Then a similar calculation as above shows for any positive integer $k$, $$(t^{x\mu})^{*_\sigma,km'+1}=xt^{\mu_k'}x\i, \text{~with~} \mu_k'=\mu+k\sum\limits_{j=0}^{m'-1} \s_0^j(\mu)-k\sum\limits_{j=0}^{m'-1}\wt(\z^{\s_0,j}\s_0^j(x),\z^{\s_0,j+1}\s_0^{j+1}(x)).$$ In other words, the dominant translation part of $(t^{x\mu})^{*_\sigma,km'+1}$ is
\begin{equation}\label{mu-k}
    \mu_k':=\mu+km'\mu^\diamond-k\sum\limits_{j=0}^{m'-1}\wt(\z\s_0)^j(x),(\z\s_0)^{j+1}(x)).
\end{equation}

Noting that $\<2\rho,\mu\>=\<2\rho,\s_0^j(\mu)\>$ as $2\rho$ is $\<\s_0\>$-invariant, we then proceed in the same way and conclude $\dim X_{t^{x\mu}}(b_{t^{x\mu}})=\frac{1}{m'}\<2\rho, \sum\limits_{j=0}^{m'-1} \wt(\z\s_0)^j(x),(\z\s_0)^{j+1}(x))\>$. 

Rewriting the dimension formula obtained above using \cref{wt-d}, we deduce the following.
\begin{proposition}\label{dim-d-av}
Let $\bG$ be an adjoint $F$-simple group and $x\in W$ be an element in its Weyl group. Suppose that $\s=\text{Ad}(\t \s_0)$ is the Frobenius, with $\t \in \Omega$ and $\s_0(\BS)=\BS$. Let $\z$ be the finite Weyl group component of $\t$, and assume that $\text{depth}(\mu) \geq 2$. Finally, let $o_{\text{tr}}(\s)$ denote the order of $\s$ as an element of $\Aut(X_*(T)_{\G_0})$. Then $$\dim X_{t^{x\mu}}(b_{t^{x\mu}})=\frac{1}{o_{\text{tr}}(\s)}\sum\limits_{j=0}^{o_{\text{tr}}(\s)-1} d(\z\s_0)^j(x),(\z\s_0)^{j+1}(x)).$$
\end{proposition}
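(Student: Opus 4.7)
My plan is to combine Theorem~\ref{generic-ADLV-dim} with iterated applications of Theorem~\ref{dem}, following the strategy already motivated in the discussion preceding the statement. Setting $w = t^{x\mu}$, Theorem~\ref{generic-ADLV-dim} reduces the task to evaluating $\lim_{n\to\infty} \ell(w^{*_\s,n})/n$. Since $\s^j(t^{x\mu}) = t^{(\z\s_0)^j(x\mu)}$ and $\z\s_0$ has order $m' = o_{\text{tr}}(\s)$ on $X_*(T)_{\G_0}$, I would evaluate this limit along the subsequence $n = km'+1$ in order to exploit cyclicity.

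The main computational step is to rewrite each $\s^j(t^{x\mu})$ as the conjugate $(\z\s_0)^j(x)\, t^{\s_0^j(\mu)}\, ((\z\s_0)^j(x))^{-1}$ and then apply Theorem~\ref{dem} inductively, with the right-hand Weyl-group factor of one term cancelling the left-hand factor of the next. Because $\s_0$ is a diagram automorphism and therefore preserves depth and dominance, every translation exponent $\s_0^j(\mu)$ remains dominant of depth $\ge 2$, so the hypothesis of Theorem~\ref{dem} is met at each step. I expect to arrive at
\[
(t^{x\mu})^{*_\s,km'+1} \;=\; x\, t^{\mu'_k}\, x^{-1},\quad \mu'_k \;=\; \mu + km'\mu^\diamond - k\!\sum_{j=0}^{m'-1}\wt\bigl((\z\s_0)^j(x),(\z\s_0)^{j+1}(x)\bigr),
\]
with $(\z\s_0)^{km'}(x) = x$ collapsing the final Weyl-group factor to $x^{-1}$. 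The main obstacle is the bookkeeping in this induction together with the parallel verification that $\mu'_k$ remains dominant at every stage; both rest on the depth-$\ge 2$ hypothesis and the pairing bound $\<\wt(a,b),\alpha\>\le 2$ of Lemma~\ref{wt-d}(1).

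Once dominance of $\mu'_k$ is secured, one may read off $\ell\bigl((t^{x\mu})^{*_\s,km'+1}\bigr) = \<2\rho,\mu'_k\>$; dividing by $km'+1$, sending $k\to\infty$, and using $\s_0$-invariance of $2\rho$ to identify $\<2\rho,\mu^\diamond\> = \<2\rho,\mu\> = \ell(t^{x\mu})$ yields
\[
\dim X_{t^{x\mu}}(b_{t^{x\mu}}) \;=\; \frac{1}{m'}\sum_{j=0}^{m'-1}\bigl\<2\rho,\wt((\z\s_0)^j(x),(\z\s_0)^{j+1}(x))\bigr\>.
\]
To finish, I would invoke Lemma~\ref{wt-d}(2), which converts each weight pairing to $d((\z\s_0)^j(x),(\z\s_0)^{j+1}(x)) + \ell((\z\s_0)^j(x)) - \ell((\z\s_0)^{j+1}(x))$; the length contributions telescope around the cyclic index to $\ell(x) - \ell((\z\s_0)^{m'}(x)) = 0$, leaving exactly the sum of graph-distances asserted in the proposition.
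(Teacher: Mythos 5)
Your proposal is correct and follows the same route as the paper's own derivation in \S 3.3: you set up the $\s$-twisted Demazure power along the subsequence $n=km'+1$, apply \cref{dem} iteratively to obtain $(t^{x\mu})^{*_\s,km'+1}=xt^{\mu'_k}x^{-1}$ with the same $\mu'_k$ as in \cref{mu-k}, check dominance of $\mu'_k$ via the depth hypothesis and \cref{wt-d}(1), take the limit in \cref{generic-ADLV-dim} using $\s_0$-invariance of $2\rho$, and finally convert $\<2\rho,\wt(\cdot,\cdot)\>$ to graph distances via \cref{wt-d}(2) with the lengths telescoping. This matches the paper's argument step for step (the paper compresses the last conversion to a one-line remark, which you spell out explicitly).
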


\subsection{via reduction to quasi-split case} 
Recall that $b_{\max} = \max\limits_{x \in W}  b_{t^{x(\mu)}}$. For computation of the generic Newton point, we can further pass on to a quasi-split group in the following way. For an adjoint $F$-simple group $\bG$ as in the previous subsection, let $\bG^*$ be its quasi-split inner form; then the Frobenius action for $\bG^*$ is given by $\s_{\bG^*}=\s_0$. By \cite[\S 2.4]{GHN16}, we can identify $B(\bG) = B(\bG^*)$ via $[b] \ra [b\dot{\t}]$, and this
bijection preserves the partial order defined via the closure relations in $\Breve{G}=\Breve{G}^*$. Hence, restricting this map to a natural bijection $B(\bG)_w = B(\bG^*)_{w\tau}$
for any $w \in \wtd{W}(\bG) = \wtd{W} (\bG^*)$, we then get $\nu^{\bG}([b_w])=\nu^{\bG^*}([b_w\dot{\t}])=\nu^{\bG^*}([b_{w\t}])$.

We now give alternative (in fact, simpler) formula for the dimension by relating it to the quasi-split case. 
\begin{proposition}\label{dim-qs}
In the setup of \cref{dim-d-av}, we have $\dim X_{t^{x\mu}}(b_{t^{x\mu}})=d(\z\i x, \s_0(x))$.
\end{proposition}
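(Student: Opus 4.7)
The plan is to invoke the quasi-split reduction described just before the statement: because the bijection $B(\bG)_w = B(\bG^*)_{w\t}$ respects the generic Newton point via $\nu^{\bG}([b_w])=\nu^{\bG^*}([b_{w\t}])$, it suffices to compute $\nu^{\bG^*}(b_{t^{x\mu}\t})$ in the quasi-split form and then feed the result into $\dim X_w(b_w) = \ell(w) - \langle 2\rho, \nu_w\rangle$ from \cref{generic-ADLV-dim}. The input to \cref{newton} will be obtained by re-expressing $t^{x\mu}\t$ in the standard form $x_1 t^{\underline{\mu}} y_1$. Using $\t = t^{\varpi^\vee}\z$ and the relation $xt^\mu x^{-1}\cdot t^{\varpi^\vee}\z = x \cdot t^{\mu + x^{-1}\varpi^\vee}\cdot (x^{-1}\z)$, the natural choice is $x_1=x$, $\underline{\mu} = \mu + x^{-1}\varpi^\vee$ and $y_1=x^{-1}\z$; here $\underline{\mu}$ is dominant of depth at least $\text{depth}(\mu)-1$ since $\varpi^\vee$ is minuscule.

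Applying \cref{newton} gives $\nu^{\bG^*}(b_{t^{x\mu}\t})$ as the $\s_0$-average of $\mu + x^{-1}\varpi^\vee - \wt(\z^{-1}x,\s_0(x))$. Pairing with $2\rho$ (which is $\s_0$-invariant) and using $\ell(t^{x\mu}\t) = \ell(t^{x\mu}) = \langle 2\rho,\mu\rangle$ (as $\ell(\t)=0$ and $\mu$ is dominant), I would obtain
\[ \dim X_{t^{x\mu}}(b_{t^{x\mu}}) = -\langle 2\rho, x^{-1}\varpi^\vee\rangle + \langle 2\rho, \wt(\z^{-1}x,\s_0(x))\rangle. \]
Next I would invoke \cref{wt-d}(2) to trade the weight for a distance: $\langle 2\rho,\wt(\z^{-1}x,\s_0(x))\rangle = d(\z^{-1}x,\s_0(x)) + \ell(\z^{-1}x) - \ell(\s_0(x))$. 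Since $\s_0$ is a length-preserving diagram automorphism, $\ell(\s_0(x)) = \ell(x)$, so the target identity reduces to the combinatorial claim
\[ \ell(\z^{-1}x) - \ell(x) = \langle 2\rho, x^{-1}\varpi^\vee\rangle. \]

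To verify this, I would exploit that $\z = w_{i,0}w_0$, so $\ell(\z^{-1}x) = \ell(w_0) - \ell(w_{i,0}x)$. Writing $x = x_I u$ with $x_I \in W_I$ and $u \in W^I$ the minimal-length coset representative for $W_I\backslash W$ (where $W_I$ is generated by the simple reflections $s_j$ for $j\neq i$), the length of $w_{i,0}x$ decomposes as $\ell(w_{i,0}) - \ell(x_I) + \ell(u)$, and the identity $\ell(w_0) - \ell(w_{i,0}) = \langle 2\rho,\varpi_i^\vee\rangle$ (counting positive roots outside $\Phi_I$) yields $\ell(\z^{-1}x) - \ell(x) = \langle 2\rho,\varpi_i^\vee\rangle - 2\ell(u)$. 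On the other side, $x^{-1}\varpi_i^\vee = u^{-1}\varpi_i^\vee$ because $x_I$ fixes $\varpi_i^\vee$; the minuscule condition forces $\langle\beta,\varpi_i^\vee\rangle = 1$ for each $\beta \in \Phi^+\setminus\Phi_I^+$, and since $u \in W^I$ the inversion set $\Phi(u)$ lies entirely in $\Phi^+\setminus\Phi_I^+$, whence $\langle 2\rho, u^{-1}\varpi_i^\vee\rangle = \langle 2\rho,\varpi_i^\vee\rangle - 2\ell(u)$. The two expressions match.

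The main obstacle I anticipate is the length identity in the last paragraph: it is the place where the specific choice $\z = w_{i,0}w_0$ together with the minuscule property of $\varpi_i^\vee$ enters decisively, and the argument breaks without the minuscule hypothesis. A secondary point to justify is that $\underline{\mu} = \mu + x^{-1}\varpi^\vee$ meets the hypothesis of \cref{newton} despite being a unit shallower than $\mu$; this is mild since the weight $\wt(\z^{-1}x,\s_0(x))$ satisfies $\langle\wt,\alpha\rangle\le 2$ by \cref{wt-d}(1) and the minuscule shift only contributes $\geq -1$ per simple coroot, so the ensuing average remains dominant and the formula is legitimate.
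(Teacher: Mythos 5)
Your overall strategy is the one the paper uses: pass to the quasi-split form, feed $t^{x\mu}\tau = x\,t^{\mu + x^{-1}\varpi_i^\vee}\,(x^{-1}\z)$ into \cref{newton}, and convert the resulting $\langle 2\rho, \wt(\z^{-1}x,\s_0(x))\rangle$ into $d(\z^{-1}x,\s_0(x))$ via \cref{wt-d}(2). Where you genuinely diverge is in establishing the bridging identity $\langle 2\rho, x^{-1}\varpi_i^\vee\rangle = \ell(x^{-1}\z) - \ell(x)$. The paper gets it in one line from the Iwahori--Matsumoto length formula $\ell(xt^\lambda y)=\ell(x)+\langle 2\rho,\lambda\rangle-\ell(y)$ applied to the two presentations of the length-$\langle 2\rho,\mu\rangle$ element $t^{x\mu}\tau$. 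You instead prove it inside the finite Weyl group by factoring $x=x_I u$ with $u\in W^I$, using $\ell(w_0)-\ell(w_{i,0})=|\Phi^+\setminus\Phi_I^+|=\langle 2\rho,\varpi_i^\vee\rangle$, and observing that the minuscule condition makes every inversion of $u^{-1}$ (all lying outside $\Phi_I^+$) pair to $1$ with $\varpi_i^\vee$. That argument is correct and is a nice, self-contained piece of Coxeter combinatorics, but it is considerably longer than what it replaces and sheds light mainly on \emph{why} the element $\z=w_{i,0}w_0$ and the minuscule hypothesis are exactly what makes the bookkeeping close up. Either route is fine.

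There is, however, a real gap in your handling of the depth hypothesis. Your application of \cref{newton} requires $\mathrm{depth}(\mu + x^{-1}\varpi_i^\vee)\geq 2$, and since $\langle x^{-1}\varpi_i^\vee,\alpha\rangle$ can equal $-1$, this only follows from $\mathrm{depth}(\mu)\geq 3$. Your attempt to wave this away by noting that the eventual $\s_0$-average $\underline\mu - \wt(\z^{-1}x,\s_0(x))$ stays dominant is not a justification: \cref{newton} is a statement whose \emph{proof} uses the depth $\geq 2$ hypothesis, and the fact that the asserted output is well-formed does not license applying the theorem outside its stated range (indeed the reasoning is circular --- you would need the formula already to know the output is as claimed). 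The paper closes this gap by a bootstrapping step: once the statement is known for $\mathrm{depth}(\mu)\geq 3$, comparing it with \cref{dim-d-av} (valid for $\mathrm{depth}(\mu)\geq 2$) yields the \emph{purely combinatorial} identity of \cref{two-expressions}, and feeding that identity back into \cref{dim-d-av} then gives the proposition for $\mathrm{depth}(\mu)=2$ as well. You should add exactly this step; without it your argument proves the statement only under the stronger hypothesis $\mathrm{depth}(\mu)\geq 3$.
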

\begin{proof}
Assume first that $\text{depth}(\mu)\geq 3$. Note that $t^{x\mu}\t=t^{x\mu}t^{\varpi_i^\vee}w_{i,0}w_0=xt^{\mu+x\i \varpi_i^{\vee}}x\i w_{i,0}w_0$. Here $\mu+x\i\varpi_i^\vee$ is dominant with depth at least $2$: for any simple root $\a$, we have $$\<\mu+x\i\varpi_i^\vee, \a\>=\<\mu,\a\>+\<\varpi_i^\vee, x\a\>\geq \<\mu,\a\>+\<\varpi_i^\vee, -\theta\>=\<\mu,\a\>-1 \geq 2.$$ Hence, by \cref{newton} we have $\nu^{\bG}(b_{t^{x\mu}})=\nu^{\bG^*}([xt^{\mu+x\i \varpi_i^{\vee}}x\i \z])=(\mu+x\i \varpi_i^\vee-\wt(\z\i x, \s_0(x)))^\diamond$, giving us
\begin{equation}\label{interm1}
    \dim X_{t^{x\mu}}(b_{t^{x\mu}})=\<2\rho, \mu-\nu(b_{t^{x(\mu)}})\> = \<2\rho, \text{wt}(\z\i x, \s_0(x))-x\i \varpi_i\>.
\end{equation}
Note that we used $\<\s_0\>$-invariance of $2\rho$ above. Now using the length formula, we get $$\<2\rho,\mu\>=\ell(t^{x(\mu)})=\ell(t^{x(\mu)}\tau)=\ell(xt^{\mu+x\i \varpi_i^{\vee}}x\i \z)=\ell(x)+\<2\rho, \mu+x\i \varpi_i^\vee \> - \ell(x\i \z),$$ giving us $\<2\rho, x\i \varpi_i^\vee\>=\ell(x\i \z)-\ell(x)$. Plugging this in \cref{interm1} and applying \cref{wt-d}, we then get $$\dim X_{t^{x\mu}}(b_{t^{x\mu}})=\<2\rho, \text{wt}(\z\i x, \s_0(x))\>-\ell(x\i \z)+\ell(x)=d(\z\i x, \s_0(x)).$$ 
Combining \cref{two-expressions} and \cref{dim-d-av}, we see that this last formula is valid even under the weaker hypothesis that $\text{depth}(\mu) \geq 2$. Hence, we are done.
\end{proof}
Let us record an immediate corollary of \cref{dim-qs} and \cref{dim-d-av}.
\begin{corollary}\label{two-expressions}
Let $x \in W$. Suppose that $\s_0$ is an automorphism of $W$ with $\s_0(\BS)=\BS$, and $\z=\z_i=w_{i,0}w_0$ for some minuscule fundamental coweight $\varpi^\vee_i$. Finally, let $o_{\text{tr}}(\s)=\min\{j: \z^{\s_0,j}=1\}$. Then $$\frac{1}{o_{\text{tr}}(\s)}\sum\limits_{j=0}^{o_{\text{tr}}(\s)-1} d(\z\s_0)^j(x),(\z\s_0)^{j+1}(x))=d(\z\i x, \s_0(x)).$$
\end{corollary}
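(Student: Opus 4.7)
The plan is to observe that the asserted identity is purely combinatorial: both sides depend only on the triple $(W,\s_0,\z)$ and the element $x$, with no reference to any cocharacter $\mu$. So it will suffice to establish the equality for one convenient choice of $\mu$, and I will use the two dimension formulas already obtained in \cref{dim-d-av} and \cref{dim-qs} to pin down the value.

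First I would fix any dominant $\mu\in X_*(T)_{\G_0}^+$ with $\text{depth}(\mu)\geq 3$; such a $\mu$ exists (e.g., take a sufficiently large multiple of the sum of fundamental coweights). Then, on the one hand, \cref{dim-d-av} applies and gives
\[
\dim X_{t^{x\mu}}(b_{t^{x\mu}})=\frac{1}{o_{\text{tr}}(\s)}\sum_{j=0}^{o_{\text{tr}}(\s)-1} d\bigl((\z\s_0)^j(x),(\z\s_0)^{j+1}(x)\bigr),
\]
which is exactly the left-hand side of the claimed identity.

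On the other hand, the first paragraph of the proof of \cref{dim-qs} — the one that uses \cref{newton} together with the length identity $\<2\rho, x\i\varpi_i^\vee\>=\ell(x\i\z)-\ell(x)$ — runs entirely under the assumption $\text{depth}(\mu)\geq 3$ and produces
\[
\dim X_{t^{x\mu}}(b_{t^{x\mu}})=d(\z\i x,\s_0(x)),
\]
independently of \cref{two-expressions} itself (the appeal to \cref{two-expressions} in the proof of \cref{dim-qs} is only invoked afterwards, to weaken the depth hypothesis from $3$ to $2$). This is the right-hand side of the identity.

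Equating the two expressions for $\dim X_{t^{x\mu}}(b_{t^{x\mu}})$ gives the corollary. There is no real obstacle here: the only point to verify is that the depth-$\geq 3$ half of \cref{dim-qs}'s proof does not secretly depend on the corollary being proved, which is immediate from inspection. The corollary thus serves as the combinatorial shadow of the fact that two a priori different dimension formulas — one obtained by iterating the Demazure product and invoking \cref{dem}, the other by transferring to the quasi-split inner form and invoking \cref{newton} — must describe the same geometric quantity.
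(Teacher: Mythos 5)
Your proof is correct and matches what the paper intends by recording this as an ``immediate corollary'' of \cref{dim-d-av} and \cref{dim-qs}. You have, moreover, correctly untangled the apparent circularity: since the identity is $\mu$-independent, it suffices to test it at any single $\mu$ of depth $\geq 3$, where the first half of the proof of \cref{dim-qs} establishes the right-hand side without appealing to \cref{two-expressions}, while \cref{dim-d-av} (valid already at depth $\geq 2$) gives the left-hand side — this is precisely the reading the paper's terse phrasing presupposes.
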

\subsection{Finishing the proof}\label{sec:finish}
As an consequence of the dimension formulas in the preceeding sections, we obtain the following estimate. 
\begin{proposition}\label{bounds}
In the setup of \cref{dim-d-av}, we have $$\ell(x\i \z \s_0(x))\geq \dim X_{t^{x\mu}}(b_{t^{x\mu}}) \geq \ell_R(x\i \z \s_0(x)).$$
\end{proposition}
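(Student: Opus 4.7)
The plan is to combine Proposition~\ref{dim-qs} with elementary properties of the quantum Bruhat graph $\Gamma_\Phi$. By \cref{dim-qs}, the quantity to estimate is $d(\zeta^{-1}x,\sigma_0(x))$. Setting $u=\zeta^{-1}x$ and $v=\sigma_0(x)$, so that $u^{-1}v = x^{-1}\zeta\sigma_0(x)$, both bounds reduce to the purely combinatorial statement
\[\ell(u^{-1}v) \;\geq\; d(u,v) \;\geq\; \ell_R(u^{-1}v),\]
and my plan is to prove each inequality by exhibiting or extracting an explicit path in $\Gamma_\Phi$.

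For the upper bound, I will fix a reduced expression $u^{-1}v = s_{i_1}s_{i_2}\cdots s_{i_k}$ with $k=\ell(u^{-1}v)$ and read off a path $u = u_0, u_1, \dots, u_k = v$ in $\Gamma_\Phi$ with $u_j := u\, s_{i_1}\cdots s_{i_j}$. The key observation is that each step $u_{j-1} \to u_j$ multiplies by a \emph{simple} reflection $s_{\alpha_{i_j}}$, so the length changes by exactly $1$: if $\ell(u_j)=\ell(u_{j-1})+1$ this is a valid upward (Bruhat) edge in $\Gamma_\Phi$, and if $\ell(u_j)=\ell(u_{j-1})-1$ then since $\alpha_{i_j}$ is simple we have $\langle 2\rho,\alpha_{i_j}^\vee\rangle = 2$, so $\ell(u_j) = \ell(u_{j-1}) - \langle 2\rho,\alpha_{i_j}^\vee\rangle + 1$ and this is a valid downward edge in $\Gamma_\Phi$. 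Hence there is a path of length $k=\ell(u^{-1}v)$ from $u$ to $v$, proving $d(u,v)\leq \ell(u^{-1}v)$.

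For the lower bound, I will take any shortest path $u = u_0, u_1, \dots, u_d = v$ in $\Gamma_\Phi$ realizing $d = d(u,v)$. Every edge (upward or downward) in $\Gamma_\Phi$ has the shape $w \to w s_\alpha$ for some positive root $\alpha$, so we may write $u_j = u_{j-1} s_{\beta_j}$ with $\beta_j \in \Phi^+$. Concatenating gives $u^{-1}v = s_{\beta_1} s_{\beta_2} \cdots s_{\beta_d}$, which is an expression of $u^{-1}v$ as a product of $d$ reflections. By definition of reflection length, $\ell_R(u^{-1}v) \leq d = d(u,v)$.

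I do not anticipate any serious obstacle: both inequalities are standard manipulations of paths in $\Gamma_\Phi$, and the only subtlety is the correct bookkeeping of upward versus downward edges in the upper bound (which works precisely because simple reflections satisfy $\langle 2\rho,\alpha^\vee\rangle = 2$). The work done in \cref{dim-qs} to reduce the dimension of the generic affine Deligne--Lusztig variety to a single distance in the quantum Bruhat graph is what makes this passage transparent.
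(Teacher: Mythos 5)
Your proposal is correct and follows essentially the same route as the paper: invoke \cref{dim-qs} to reduce to the purely combinatorial inequality $\ell(a^{-1}b)\geq d(a,b)\geq \ell_R(a^{-1}b)$, prove the upper bound by walking a reduced word for $a^{-1}b$ (each right multiplication by a simple reflection is a valid up- or down-edge in $\Gamma_\Phi$, the latter because $\langle 2\rho,\alpha^\vee\rangle=2$ for $\alpha$ simple), and prove the lower bound by reading a reflection factorization off any shortest path. You are slightly more explicit than the paper about why the down-steps are admissible edges, which is a useful clarification, but the argument is the same.
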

\begin{proof}
We can prove it using the dimension formula established in either \cref{dim-d-av} or \cref{dim-qs}, but we choose to resort to the latter for notational ease. Then the assertion follows from the easy observation:

(a) for any two elements $a,b \in W$, we have $\ell(a\i b) \geq d(a,b) \geq \ell_R(a\i b)$.

Taking any reduced expression for $a\i b=s_{i_1}\cdots s_{i_k}$ and following the edges determined by the simple roots $\a_{i_k},\cdots, \a_{i_1}$ in order, we obtain a path from $a$ to $b$ which
has exactly $\ell(a\i b)$ edges. This proves the first inequality. For the other one, let $d=d(a,b)$ and choose a shortest path $a \ra ar_1 \ra ar_1 r_2 \cdots \ra ar_1\cdots r_d=b$, where $r_i$'s are not necessarily simple roots. Then $a\i b = r_1\cdots r_d$ is a decomposition of $a\i b$ into reflections, proving the other inequality.
\end{proof}
The reflection length of $w \in W$ is the smallest non-negative integer $l$ such that $w$ can
be written as a product of $l$ reflections in $W$. We denote by $\ell_R(w)$ the reflection
length of $w$. Let $\xi \in W$. Then we denote the $\s_0$ conjugacy class of $\xi$ in $W$ by $\CO_{\s_0}(\xi)$. We define $\ell_R(\CO_{\s_0}(\xi))=\min\{\ell_R(w): w\in \CO_{\s_0}(\xi)\}$. We can define $\ell(\CO_{\s_0}(\xi))$ similarly.

\begin{proof}[Proof of \cref{thm}]
Since $\dim X(\mu,b_{\max}) = \min\limits_{x \in W} \dim X_{t^{x\mu}}(b_{t^{x\mu}})$, we see from \cref{bounds} $$\ell(\CO_{\s_0}(\z)) \geq \dim X(\mu,b_{\max}) \geq \ell_R(\CO_{\s_0}(\z)).$$
By \cref{key-lemma} proved in \cref{sec:minlength}, we have $\ell(\CO_{\s_0}(\z))=\ell_R(\CO_{\s_0}(\z))=|\s_0\backslash \wtd{\BS}|-|\s \backslash \wtd{\BS}|$, hence it equals the dimension. By \cref{rk}, $|\s_0\backslash \wtd{\BS}|-|\s \backslash \wtd{\BS}|=\text{rk}_F^{\text{ss}} \bG^* - \text{rk}_F^{\text{ss}} \bG$, and we are done.
\end{proof}
\begin{remark}
As a byproduct of our discussion, we obtain that $$\ell_R(\mathcal(O_{\s_0}(\z))=\min\limits_{x\in W} d(\z\i x,\s_0(x))$$ for the element $\z=w_{i,0}w_0$, whenever $\varpi_i^\vee$ is minuscule. Contrast this with theorem 5.1 in \cite{HY21} that says $$\ell_R(\mathcal{O}_{\s_0}(w))=\min\limits_{x \in W} d(x,\s_0(x)w)$$ for the element $w=w_0$. It would be an interesting problem to explore connection between reflection length and distance in the quantum Bruhat graph for arbitrary elements of $W$.
\end{remark}

\section{Minimal length elements in certain $\s_0$-conjugacy class}\label{sec:minlength}
For simplicity, in the rest of the paper,
we will say $\s_0$-orbits in $\wtd{\BS}$ instead of $\text{Ad}(\s_0)$-orbits, likewise for $\text{Ad}(\s)$ etc. The goal of this section is to prove the following result via a case-by-case analysis. 
\begin{lemma}\label{key-lemma}
Suppose that $(W,\BS)$ is an irreducible finite Coxeter system associated to a Weyl group $W$ and let $\wtd{\BS}$ be the associated set of affine simple roots. Let $\s_0$ be a diagram automorphism of $(W,\BS)$ and $\t=t^{\varpi_i^\vee}w_{i,0}w_0\in \wtd{W}$, where $\varpi_i^\vee$ is a minuscule fundamental coweight. Then there exists a subset $J \subset \BS$ of cardinality $|\s_0\backslash \wtd{\BS}|-|\t \s_0 \backslash \wtd{\BS}|$ such that the set $\CO_{\min,\s_0}(w_{i,0}w_0)$ of minimal length elements in $\CO_{\s_0}(w_{i,0}w_0)$ consists of Coxeter elements of $W_J$. This subset $J$ is unique, unless $(W,\varpi_i^\vee)$ is of the form $(^2 D_{2n},\varpi_{2n-1}^\vee)$ or $(^2 D_{2n},\varpi_{2n}^\vee)$.
\end{lemma}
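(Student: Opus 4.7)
The approach is a case-by-case analysis indexed by the pair $(W,\varpi_i^\vee)$ of an irreducible Weyl group and a minuscule fundamental coweight, and then by the diagram automorphism $\s_0$. The minuscule coweights are classified: they occur in types $A_n$ (all indices), $B_n$ (only $\varpi_n^\vee$), $C_n$ (only $\varpi_1^\vee$), $D_n$ (indices $1, n-1, n$), $E_6$ (indices $1, 6$), and $E_7$ (only index $7$); nontrivial diagram automorphisms $\s_0$ occur only in types $A_n$, $D_n$, and $E_6$. For each combination I would first write $\z = \z_i = w_{i,0}w_0$ explicitly in the standard (signed-)permutation realization of the classical Weyl groups, treating the exceptional types directly, and then look for a minimum-length representative of $\CO_{\s_0}(\z)$.

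The core tool will be the theory of minimum-length elements in $\s_0$-twisted conjugacy classes of finite Coxeter groups developed by Geck-Pfeiffer and extended to the twisted case by He-Nie: any two elements of $\CO_{\min,\s_0}(\z)$ are connected by a sequence of $\s_0$-cyclic shifts, and for such an element the length coincides with the reflection length. Hence it suffices to produce, in each case, a single representative $c \in \CO_{\s_0}(\z)$ that (i) admits no further length-reducing $\s_0$-cyclic shift and (ii) is visibly of the form $s_{j_1}\cdots s_{j_k}$ with the $s_{j_\ell} \in \BS$ pairwise distinct; the subset $J=\{j_1,\dots,j_k\}$ so obtained is the desired one. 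Producing such a $c$ is then a concrete type-by-type task: starting from $\z_i$ and applying length-reducing cyclic shifts $w \mapsto s w \s_0(s)$, one eliminates simple reflections that occur in $\z$ together with their $\s_0$-images, arriving at the claimed Coxeter element of $W_J$.

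To match the cardinality note that $\t\s_0 = \s$, so $|\s_0\backslash\wtd{\BS}| - |\t\s_0\backslash\wtd{\BS}|$ counts the number of $\s_0$-orbits on $\wtd{\BS}$ that get merged under the further action of $\t$, a combinatorially explicit quantity in each type. A parallel combinatorial observation will identify the simple roots surviving the cyclic-shift reduction with one representative from each $\s_0$-orbit that was \emph{not} merged by $\t$, yielding both the correct cardinality of $J$ and, in most cases, a canonical choice of $J$. The main obstacle will be the non-uniqueness flagged in the statement: for $(^2 D_{2n},\varpi_{2n-1}^\vee)$ and $(^2 D_{2n},\varpi_{2n}^\vee)$ the order-two diagram automorphism $\s_0$ interchanges the two forked branches of the $D_{2n}$ diagram, producing two symmetric subsets $J, J'$ whose Coxeter elements both represent $\CO_{\min,\s_0}(\z_i)$. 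Beyond this, the bookkeeping in the other nontrivial $\s_0$ cases of types $^2 A_n$ and $^2 E_6$, and direct verification in $E_6$ and $E_7$ (most cleanly carried out with computer-algebra assistance), will be the most technically involved steps, and this is what forces the case-by-case format of the proof.
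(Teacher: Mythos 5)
Your overall strategy coincides with the paper's: a type-by-type enumeration, explicitly writing out $\z_i=w_{i,0}w_0$, applying $\s_0$-cyclic shifts to reach a minimal-length representative, and matching the surviving simple reflections to a subset $J$ of the asserted cardinality. Two points need correction, though.

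First, a factual slip: you have the minuscule coweights of types $B_n$ and $C_n$ interchanged. The highest root of $B_n$ is $\alpha_1+2\alpha_2+\cdots+2\alpha_n$, so the unique minuscule fundamental coweight is $\varpi_1^\vee$; for $C_n$ the highest root is $2\alpha_1+\cdots+2\alpha_{n-1}+\alpha_n$, so it is $\varpi_n^\vee$. (Equivalently: the minuscule coweights of $B_n$ are dual to the minuscule weights of $C_n$, namely the vector representation.) The paper uses the correct identifications, and working with the wrong index would give the wrong $\z$ and hence the wrong $J$ in these two families.

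Second, the inference ``$\CO_{\min,\s_0}(\z)$ are connected by cyclic shifts, and for such an element the length coincides with the reflection length; hence it suffices to produce a representative $c$ with no length-reducing cyclic shift that is visibly a product of distinct simple reflections'' is logically off: the equality $\ell(w)=\ell_R(w)$ for minimal-length elements of a (twisted) conjugacy class is \emph{not} a general fact (take $w_0=-1$ in $B_2$, where $\ell=4$ but $\ell_R=2$ and the class is a singleton). Here the equality is a \emph{consequence} of the lemma, not an available tool. What you actually use is just the Geck--Pfeiffer/He--Nie fact that an element admitting no length-reducing $\s_0$-cyclic shift is of minimal length; drop the length-versus-reflection-length claim and keep the cyclic-shift argument. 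You should also note, as the paper does, that the cyclic-shift method only produces \emph{one} minimal-length representative; to show $\CO_{\min,\s_0}(\z)$ \emph{consists} of Coxeter elements of $W_J$, one needs the additional observation that in the nontrivial $\s_0$ cases $J$ is either empty or $W_J$ has a unique Coxeter element, and in the trivial $\s_0$ cases all Coxeter elements of $W_J$ are conjugate, hence simultaneously of minimal length. Apart from these issues, your identification of where the non-uniqueness of $J$ comes from (the forked branches of $D_{2n}$ exchanged by $\s_0$) matches the paper.
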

\begin{proof}
As our calculation below shows, the set $J$ is either empty or $W_J$ has only one Coxeter element in the cases where $\s_0$ is nontrivial. As for the cases where $\s_0$ is trivial, we only have to show \text{some} Coxeter element from asserted $W_J$ belongs to $\CO_{\min,\s_0}(w_{i,0}w_0)$ - since all such Coxeter elements are conjugate to each other. 

We follow the labelling of \cite{Bou}. To simplify notation, in the exceptional types we may simply write $s_{ij\cdots}$ for $s_i s_j \cdots$.
For $1 \leq a \leq b\leq n$, we abbreviate $s_{[a,b]}=s_as_{a+1}\cdots s_b$. Below we group together the cases according to their absolute local Dynkin diagram.
\begin{enumerate}
    \item Type $A_n$: Here all the fundamental coweights are minuscule and $\t_i$ acts on $\wtd \BS$ by $i$-step rotations, thus the number of $\t_i$-orbits on $\wtd{S}$ are $\frac{|\wtd \BS|}{|o(\text{Ad~}\t_i)|}=\frac{n+1}{\gcd(n+1,i)}$. 
    
    Here $\z_1=s_{[1,n]}$ is a Coxeter element for $W$ and $\z_i=\z_1^i$ for $i \geq 2$. Set $\k_i=\frac{n+1}{\gcd(n+1,i)}$ and let $m$ be the largest integer smaller than $\frac{n}{\k_i}$, i.e. $m=\lfloor \frac{n}{\k_i} \rfloor$. Then we have $$J_i= \{1,\cdots,\k_i-1\} \cup \{\k_{i+1},\cdots, 2\k_i-1\}\cup \cdots \cup \{m\k_i+1,\cdots, n\}.$$
    
    \item Type $^2 A_n$: The nontrivial diagram automorphism $\s_0$ of $\BS$ is conjugation by $w_0$, hence minimal length elements in $\CO_{\s_0}(w_{i,0}w_0)$ are maximal length elements in $\CO_{\text{id}}(w_{i,0})$ multiplied with $w_0$ on the right. The orbits of $\s_0$ on $\wtd{\BS}$ are 
    \begin{equation*}
    \begin{cases}
     \{0\}, \{i,n+1-i\} \text{~for~} 1\leq i \leq \frac{n}{2}  & ,\text{if $n$ is even;}\\
     \{0\},\{\frac{n+1}{2}\}, \{i,n+1-i\} \text{~for~} 1\leq i \leq \frac{n-1}{2} & ,\text{if $n$ is odd.}
    \end{cases}   
    \end{equation*}
    In this case, $\t_i \s_0$-orbits on $\wtd{\BS}$ are $\{0, i\}, \{j, i - j\}$ for $0 < j < i$,
    and $\{i + j, n - j\}$ for $0 < j < n - i$. We have that $J=\emptyset$ if $n$ is even or $i$ is even, and $J=\{ \frac{n+1}{2}\}$ if both $n, i$ are odd.
    \item Type $B_n$: Here the only minuscule coweight is $\varpi_1^\vee$, so $\t=\t_1$; its orbits are $\{0,1\},\{i\}$ for $2\leq i \leq n$. We have $\z_1=s_{[1,n-1]}s_n s_{[1,n-1]}\i$, thus $J=\{n\}.$
    \item Type $C_n$: Here the only minuscule coweight is $\varpi_n^\vee$, so $\t=\t_n$; its orbits are
    \begin{equation*}
    \begin{cases}
     \{i,n+1-i\}, \text{~for~} 0 \leq i \leq \frac{n-1}{2} & ,\text{if $n$ is odd;}\\
     \{\frac{n}{2}\}, \{i,n+1-i\}, \text{~for~} 0 \leq i \leq \frac{n}{2}-1 & ,\text{if $n$ is even.}
    \end{cases}   
    \end{equation*}
    We have $\z_n=s_n s_{[n-1,n]}\cdots s_{[2,n]}s_{[1,n]}$ and $J=\{i: 1\leq i \leq n, i \text{~is odd}\}$.
    \item Type $D_n$: Here we have three minuscule coweights: $\varpi_1^\vee, \varpi_{n-1}^\vee, \varpi_n^\vee$. There is an outer diagram automorphism of $D_n$ permuting the last two coweights. Thus it suffices to consider the case where $\t=\t_1$ or $\t_n$.
    
    Case 1: $\t=\t_1$. The $\t$-orbits on $\wtd{\BS}$ are $\{0, 1\}, \{n - 1, n\}$ and $\{i\}$ for $2 \leq i \leq n - 2$. We have $\z=\z_1=s_{[1,n-2]} s_{[1,n]}\i$ and $J=\{n-1,n\}$.
    
    Case 2: $\t=\t_n$; then there are two sub-cases.
    \begin{itemize}
        \item $n$ is odd: The $\t$-orbits on $\wtd{\BS}$ are $\{0, n, 1, n-1\}$ and $\{i, n-i\}$ for $2\leq i \leq \frac{n-1}{2}$. Here $\z=\z_n=s_ns_{[n-2,n-1]}s_{[n-3,n-2]}s_n\cdots s_ns_{[3,n-1]}s_{[2,n-2]}s_ns_{[1,n-1]}$ and $J=\{i: 1\leq i\leq n-2, i\text{~is odd}\} \cup \{n-1,n\}$.
        \item $n$ is even: The $\t$-orbits on $\wtd{\BS}$ are $\{i,n-i\}$ for $0 \leq i \leq \frac{n}{2}$. Here $\z=\z_n= s_ns_{[n-2,n-1]}s_{[n-3,n-2]}s_n$ $\cdots s_ns_{[2,n-1]}s_{[1,n-2]}s_n$ and 
        \begin{equation*}
        J=
    \begin{cases}
     \{i: 1\leq i \leq n, i \text{~is odd}\}, & ,\text{if $n$ is congruent to $0$ mod $4$;}\\
     \{i: 1\leq i \leq n-2, i\text{~ is odd}\} \cup \{n\} & ,\text{if $n$ is congruent to $2$ mod $4$.}
    \end{cases}   
    \end{equation*}
    \end{itemize}
    \item Type $^2 D_n$: The nontrivial diagram automorphism $\s_0$ of $\BS$ here swaps the vertices labeled $n-1, n$; the orbits on $\wtd{\BS}$ are therefore $\{n-1,n\}, \{i\}$ for $0\leq i\leq n-2$. As explained before, we have the following three cases.
    
    Case 1: $\t=\t_1$. The $\t \s_0$-orbits on $\wtd{\BS}$ are $\{0, 1\}$ and $\{i\}$ for $2 \leq i \leq n$. One may directly check that $1 \in \CO_{\s_0}(\z_1)$ and hence $J=\emptyset$.
    
    Case 2: $\t=\t_n$; then there are two sub-cases.
    \begin{itemize}
        \item $n$ is odd: The $\t \s_0$-orbits on $\wtd{\BS}$ are $\{i, n-i\}$ for $0\leq i \leq \frac{n-1}{2}$, and $J=\{i: 1\leq i\leq n-2, i\text{~is odd}\}$.
        \item $n$ is even: The $\t\s_0$-orbits on $\wtd{\BS}$ are $\{0,1,n-1,n\}$ and $\{i,n-i\}$ for $2 \leq i \leq \frac{n}{2}$, and the choices for $J$ are $\{i: 1\leq i \leq n, i \text{~is odd}\}$ and $\{i: 1\leq i \leq n-2, i\text{~is odd}\} \cup \{n\}$.
    \end{itemize}
    
    \item Type $^3 D_4$: Without loss of generality, we may assume that $\s_0$ is the outer diagram automorphism on $D_4$ sending $1 \ra 3 \ra 4 \ra 1$. As $\langle \s_0\rangle$ acts transitively on $\{1, 3, 4\}$, it suffices to consider the case where $\t=\t_1$. In that case, the orbits of $\t \s_0$ on $\wtd{\BS}$ are $\{0,1,4\},\{2\},\{3\}$. Finally, one directly checks that $1 \in \CO_{\s_0}(\z_1)$, whence $J=\emptyset$.
    \item Type $E_6$: There are two minuscule coweights: $\varpi_1^\vee, \varpi_6^\vee$. The unique outer diagram automorphism of $E_6$ permutes these two coweights. Thus it suffices to consider the case where $\t=\t_1$; its orbits on $\wtd{\BS}$ are $\{0,1,6\},\{2,3,5\},\{4\}$. We have $\z=\z_1=s_{1345624534132456}$, and $J=\{1,3,5,6\}$.
    \item Type $^2 E_6$: As explained above, it suffices to consider the case where $\t=\t_1$. Here $\s_0$ acts on $\wtd{\BS}$ by swapping $1 \leftrightarrow 6, 3 \leftrightarrow 5$, so its orbit on $\wtd{\BS}$ are $\{0\},\{1,6\},\{2\},\{3,5\},\{4\}$. Also, the orbits of $\t \s_0$ on $\wtd{\BS}$ are $\{0,1\},\{2,3\},\{4\},\{5\},\{6\}$. Finally, one directly checks that $1 \in \CO_{\s_0}(\z_1)$, whence $J=\emptyset$.
    \item Type $E_7$: There is a unique minuscule coweight: $\varpi_7^\vee$, so $\t=\t_7$. In this case, $\t$-orbits on $\wtd{\BS}$ are $\{0, 7\}, \{1, 6\}, \{3, 5\}, \{2\}, \{4\}$. Here $\z=\z_7=\z=s_{765432456713456245341324567}$, and $J=\{2,5,7\}$.

\end{enumerate}
\end{proof}
\begin{remark}
In type $A_n$, the elements $\z_i$ are regular elements of the Weyl group. In that case, the asserted conjugacy relations follow more conceptually from \cite[theorem 4.2(4)]{Sp74}.
\end{remark}
\section{Explicit description of the Newton point of $b_{\max}$}\label{b-max-compute}
Note that the existence of $b_{\max}$ as in \cref{b-max} was asserted in \cite{HN18} via reduction to a similar statement on the associated Iwahori-Weyl group, and then they further reduce it to treat the superbasic case and conclude it from there. However, this is somewhat indirect and does not give an explicit formula for the maximal element. Based on our calculation, in this section we list out $\nu(b_{\max})$ for the cases treated in \cref{key-lemma}. 

\subsection{}
Recall that the calculation in \cref{case-free} combined with \cref{generic-ADLV-dim} gives us
\begin{equation}\label{2rho}
    \< 2\rho, \nu_{t^{x\mu}} \>=\<2\rho,\mu^\diamond - \text{av}_{\s}(x)\>,
\end{equation}
where we set $\text{av}_{\sigma}(x):=\frac{1}{o_{\text{tr}}(\s)}\sum\limits_{i=0}^{o_{\text{tr}}(\s)-1}\wt((\z \s_0)^i(x),(\z \s_0)^{i+1}(x))$
However, one can upgrade this.
\begin{proposition}\label{cochar=} Suppose that we are in the setup of \cref{dim-d-av}. Let us further assume that $\nu_{t^{x\mu}}$ is regular. Then we have $$\nu_{t^{x\mu}} =\mu^\diamond -\text{av}_{\s}(x).$$
\end{proposition}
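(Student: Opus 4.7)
\medskip

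\noindent\textit{Strategy.} The identity \eqref{2rho} already gives $\langle 2\rho, \nu_{t^{x\mu}}\rangle=\langle 2\rho,\mu^\diamond-\text{av}_\sigma(x)\rangle$, so the task is to upgrade this scalar identity to an equality of cocharacters. The plan is to extract more than the $\ell$-asymptotic from the Demazure-power analysis of Section 3.3.2, by computing the generic Newton point of both sides of the key identity $(t^{x\mu})^{*_\sigma, km'+1}=xt^{\mu_k'}x\i$ in two distinct ways and then passing to the limit.

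\medskip

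\noindent\textit{Step 1 (Newton scaling under Demazure powers).} The main auxiliary input is the cocharacter refinement of the second equality in \cref{generic-ADLV-dim}: for any $w\in\wtd W$ and $n\ge 1$,
\[
\nu_{w^{*_\sigma, n}}=n\,\nu_w.
\]
This follows because $w^{*_\sigma,n}$ parametrizes the top-dimensional Iwahori cell in the convolution $\breve\CI\dot w\breve\CI\cdot\sigma(\breve\CI\dot w\breve\CI)\cdots\sigma^{n-1}(\breve\CI\dot w\breve\CI)$; and if $b\in\breve\CI\dot w\breve\CI$ represents the generic $\sigma$-conjugacy class, then $b\sigma(b)\cdots\sigma^{n-1}(b)$ is a generic point of that convolution, whose Newton point is $n\,\nu([b])$ by the functoriality of the Newton map under $\sigma$-twisted products.

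\medskip

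\noindent\textit{Step 2 (direct computation of $\nu_{xt^{\mu_k'}x\i}$).} Apply the reduction-to-quasi-split-case argument used in the proof of \cref{dim-qs} to $w=xt^{\mu_k'}x\i$: we have $w\tau=xt^{\mu_k'+x\i\varpi_i^\vee}x\i\zeta$, and since $\mu_k'$ has depth tending to infinity with $k$, for $k$ sufficiently large the element $\mu_k'+x\i\varpi_i^\vee$ is dominant of depth at least $2$. \cref{newton} then yields
\[
\nu_{xt^{\mu_k'}x\i}=\bigl(\mu_k'+x\i\varpi_i^\vee-\wt(\zeta\i x,\s_0(x))\bigr)^{\s_0\text{-av}}.
\]
Combining with Step 1 (applied to $w=t^{x\mu}$, $n=km'+1$), dividing by $km'+1$, and letting $k\to\infty$ using the explicit form of $\mu_k'$ from \eqref{mu-k} together with the continuity and linearity of $\s_0$-averaging, the right-hand side converges to $(\mu^\diamond-\text{av}_\sigma(x))^{\s_0\text{-av}}$, while the left-hand side stays $\nu_{t^{x\mu}}$.

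\medskip

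\noindent\textit{Step 3 (role of regularity).} Newton points are automatically $\s_0$-invariant and dominant, so $\nu_{t^{x\mu}}=(\mu^\diamond-\text{av}_\sigma(x))^{\s_0\text{-av}}$ is $\s_0$-invariant by construction. The regularity hypothesis forces this limit to lie in the interior of the dominant chamber; in particular $\mu^\diamond-\text{av}_\sigma(x)$ itself must be dominant and $\s_0$-invariant, so it coincides with its own $\s_0$-average, giving the desired equality. A pleasant byproduct is that $\text{av}_\sigma(x)$, which is not manifestly $\s_0$-invariant from its definition as a sum of quantum Bruhat graph weights, is forced to be $\s_0$-invariant whenever $\nu_{t^{x\mu}}$ is regular. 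The main obstacle in this plan is establishing the scaling relation $\nu_{w^{*_\sigma,n}}=n\,\nu_w$ in Step 1 as a cocharacter identity (not merely as an identity of $\langle2\rho,\cdot\rangle$-pairings as in \cref{generic-ADLV-dim}); this appears to be folklore and can be justified using the generic-point interpretation of Demazure products recorded in \cite{He21}.
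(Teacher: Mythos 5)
Your Step 1 is the crux of the argument, and it is not established; the justification offered is flawed on two counts. First, the generic point of the convolution $\breve\CI\dot w\breve\CI\cdot\sigma(\breve\CI\dot w\breve\CI)\cdots$ is a product $b_1\sigma(b_2)\cdots\sigma^{n-1}(b_n)$ with the $b_i$ varying \emph{independently}, not a single $b^{\sigma,n}$, so $b^{\sigma,n}$ need not be generic in $\breve\CI\,\dot{w}^{*_\sigma,n}\,\breve\CI$. Second, even for a fixed $b$, the map $[b]_\sigma\mapsto[b^{\sigma,n}]_\sigma$ is not well-defined on $\sigma$-conjugacy classes (replacing $b$ by $gb\sigma(g)^{-1}$ replaces $b^{\sigma,n}$ by $gb^{\sigma,n}\sigma^n(g)^{-1}$, a $\sigma^n$-conjugate rather than a $\sigma$-conjugate), so ``the Newton point of $b^{\sigma,n}$ is $n\nu([b])$'' has no cocharacter-level content beyond the scalar statement $\langle 2\rho,\nu_w\rangle=\lim\ell(w^{*_\sigma,n})/n$ which is already \cref{generic-ADLV-dim}. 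What \emph{can} be extracted cleanly is a one-sided inequality: the paper does this via \cref{str}, picking a straight element $w_s\leq w$ with $\nu(w_s)=\nu_w$, using the Bruhat-order chain $w_s^{\sigma_0,n-1}\leq w_s^{*_{\sigma_0},n}\leq w^{*_{\sigma_0},n}$, and applying the monotonicity of the projection to dominant coweights under Bruhat order (fact (a) in the paper's proof). This yields $\nu_w\leq\mu^\diamond-\text{av}_\sigma(x)$ after letting $k\to\infty$, and — crucially — this is where the regularity hypothesis is actually used, via \cite[proposition 2.4]{He14}, to write $w_s=yt^\gamma\sigma_0(y)^{-1}$ so that the translation part of $w_s^{\sigma_0,n}$ can be controlled exactly.

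Step 3 is also circular as written. From your Steps 1–2 you would obtain $\nu_{t^{x\mu}}=(\mu^\diamond-\text{av}_\sigma(x))^{\sigma_0\text{-av}}$, and you then assert that regularity of this average forces $\mu^\diamond-\text{av}_\sigma(x)$ to be $\sigma_0$-invariant. That does not follow: a $\sigma_0$-average can be regular while the averaged vector is not $\sigma_0$-invariant. The $\sigma_0$-invariance of $\text{av}_\sigma(x)$ is a \emph{consequence} of the proposition, not something you can assume on the way to proving it. The correct way to close the gap, once the one-sided cocharacter inequality $\nu_{t^{x\mu}}\leq\mu^\diamond-\text{av}_\sigma(x)$ is in hand, is to combine it with the scalar identity \eqref{2rho}: since $\mu^\diamond-\text{av}_\sigma(x)-\nu_{t^{x\mu}}$ is a nonnegative rational combination of positive coroots whose pairing against $2\rho$ vanishes, and $\langle 2\rho,\alpha^\vee\rangle>0$ for every positive coroot $\alpha^\vee$, the difference is zero. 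This finishing step is what ``Hence by \eqref{2rho}, we are done'' abbreviates in the paper, and it is what replaces your Step 3.
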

\begin{lemma}\label{str}
Let $w \in \wtd{W}$. Then there is a straight element $ w_s \in \wtd{W}$ such that $w_s \leq w$ and $\nu(w_s)=\nu_w$. 
\end{lemma}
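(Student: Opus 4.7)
The plan is to proceed by induction on the length $\ell(w)$, descending through the Bruhat order via the Deligne--Lusztig reduction on Iwahori double cosets. The base case $\ell(w) = 0$ is immediate: $w \in \Omega$ is automatically $\s$-straight, and $\nu_w = \nu(\dot w)$, so one takes $w_s = w$.

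For the inductive step, suppose $\ell(w) > 0$ and assume the lemma for all elements of shorter length. I would first treat the case that there exists $s \in \wtd{\BS}$ with $\ell(sw\s(s)) < \ell(w)$. Since $\s$-conjugation by a simple affine reflection changes length by $0$ or $\pm 2$, this forces $\ell(sw\s(s)) = \ell(w) - 2$, and consequently both $sw$ and $sw\s(s)$ lie strictly below $w$ in the Bruhat order (indeed $sw\s(s) < sw < w$). The Deligne--Lusztig reduction applied to $\breve\CI w \breve\CI$ (cf.\ \cite{He14}) then yields $B(\bG)_w = B(\bG)_{sw} \cup B(\bG)_{sw\s(s)}$, and since $[b_w]$ is the unique maximum of $B(\bG)_w$, it must coincide with either $[b_{sw}]$ or $[b_{sw\s(s)}]$. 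Applying the inductive hypothesis to the corresponding shorter element then produces the desired straight $w_s \leq w$ with $\nu(w_s) = \nu_w$.

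The remaining case is that no such $s$ exists, i.e.\ $w$ is of minimal length in its $\s$-conjugacy class $\CO \subset \wtd{W}$. Here He's results on minimal length elements give $[\dot w] = [b_w]$ in $B(\bG)$, so that $\nu(\dot w) = \nu_w$; in particular, if $\CO$ is itself a straight class, then $w$ is straight and one takes $w_s = w$. The main obstacle I anticipate is the subcase where $\CO$ is \emph{non-straight}: no length-reducing move remains available, yet one must still exhibit a straight element of the associated straight class $\Psi^{-1}([\dot w])$ that lies below $w$ in the Bruhat order. My plan here is to cycle among the minimal length elements of $\CO$ via length-preserving $\s$-conjugations and then invoke He's partial $\s$-conjugation analysis (in the spirit of the $P$-alcove decomposition) to exhibit, from one such representative, a Bruhat step into the straight class $\Psi^{-1}([\dot w])$, yielding the required $w_s$.
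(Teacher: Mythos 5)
Your plan rests on a dichotomy that is not correct and that, once repaired, undermines the inductive framework. You split into the case that some $s\in\wtd{\BS}$ has $\ell(sw\s(s))<\ell(w)$ and the case that $w$ is of minimal length in its $\s$-conjugacy class, asserting these are complementary. They are not: the Geck--Pfeiffer/He--Nie theorem only guarantees that from a non-minimal $w$ one can reach a strictly shorter element by a \emph{sequence} of elementary $\s$-conjugations, each of non-increasing length; the first length-decreasing step need not be available directly from $w$. So there are elements $w$ which fall into neither of your two cases. Worse, the obvious repair --- passing through a length-preserving conjugation $w\mapsto sw\s(s)$, for which the Deligne--Lusztig reduction gives $B(\bG)_w=B(\bG)_{sw\s(s)}$ --- changes the ambient Bruhat interval: $sw\s(s)$ and $w$ are in general not comparable, so a straight $w_s\leq sw\s(s)$ furnished by induction cannot simply be declared $\leq w$. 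The lemma's conclusion is specifically a Bruhat inequality against the original $w$, and this is exactly what gets lost under length-preserving cyclic shifts. (Replacing the descent target by $sw$, the common Bruhat-lower bound, also does not work, since $\nu_{sw}$ may be strictly smaller than $\nu_w$.)

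The second, and in fact central, gap is in your final subcase. For $w$ of minimal length lying in a non-straight class you write that you would ``cycle among minimal length elements \dots and then invoke He's partial $\s$-conjugation analysis \dots to exhibit a Bruhat step into the straight class.'' This is a research plan, not an argument: no lemma is stated, no mechanism producing a straight element \emph{below $w$ itself} is identified, and cycling via length-preserving conjugations again runs into the incomparability problem above. You also invoke ``$[\dot w]=[b_w]$ for $w$ of minimal length'' without justification; for non-straight minimal-length $w$ this is precisely the kind of statement that needs to be proved rather than quoted (for straight $w$ it is known, but that is the easy subcase you already dispose of). For contrast, the paper does not run a length-descent induction at all: it obtains the lemma from the Demazure-power description of $\nu_w$ and the analysis of $\s$-twisted Demazure powers in \cite[\S 2.2]{He21}, which produces the straight element $w_s\leq w$ directly from the structure of the stabilized power $w^{*_\s,n}$, bypassing the Bruhat-comparability issue entirely. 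I would recommend either supplying the missing bridging lemma across length-preserving conjugations and an actual argument in the non-straight minimal case, or switching to the Demazure-power route.
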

\begin{proof}
This essentially follows from the arguments in \cite[section 2.2]{He21}. 
\end{proof}
\begin{proof}[Proof of \cref{cochar=}]
Let $w'=t^{x(\mu)}$ and choose a straight element $w_s$ corresponding to $w:=w'\t$ as given in \cref{str}. By properties of Demazure product and by definition of straight elements, 
\begin{equation}\label{eqn-ineqn}
    w_s^{\s_0,n-1} \leq w_s^{\s_0,n}=w_s^{*_{\s_0},n} \leq w^{*_{\s_0},n} \text{ for all } n\in \mathbb{N}.
\end{equation}
Note that $w'^{*_\s,n}=w^{*_{\s_0},n}$ whenever $(\t \s_0)^n=1$, i.e. $o_{\text{tr}}(\s)\mid n$. Furthermore, for such $n$ we have $\s^n(t^{x\mu})=t^{x\mu}, \s_0^n=1$, and hence
\begin{equation}\label{power-eqn}
   w^{*_{\s_0},n+1}=(w'\t)^{*_{\s_0},n}*w'\t=(t^{x\mu})^{*_\s,n}*t^{x\mu}\t=(t^{x\mu})^{*_\s,n+1}\t. 
\end{equation}

Let us recall the following well-known fact.

(a) Let $\text{pr}: \wtd{W } \ra X_*(T)_{\G_0}$ be the projection map, which sends any element $w$ to the unique dominant
coweight $\l$ with $w \in W t^\l W$. Assume that $w_1, w_2 \in \wtd{W}$ such that $w_1 \leq w_2$ in Bruhat order. Then $(\text{pr}(w_1))^+ \leq (\text{pr}(w_2))^+$ in the dominance order.

Recall now from \cref{sec:dem-power} that $o_{\text{tr}}(\s) \mid o(\s)$; let $o(\s)=\Xi o_{\text{tr}}(\s)$. Then the calculation in \cref{sec:dem-power}, combined with \cref{power-eqn}, shows that for $n=ko(\s)+1$ (with $k \in \mathbb{Z}_{\geq 0}$), we have $$w^{*_{\s_0},n}=xt^{\mu_k''}x\i \t=x^{\mu_k''+x\i \varpi_i^\vee}x\i \z, \text{ with }\mu_k''=\mu+ko(\s) \mu^\diamond-k\Xi\sum\limits_{j=0}^{o_{\text{tr}}(\s)-1}\wt(\z\s_0)^i(x),(\z\s_0)^{i+1}(x)).$$  

Since $\nu(w_s)$ is assumed to be regular, we have by \cite[proposition 2.4]{He14} that $w_s=yt^\g \s_0(y)\i$ for some $y \in W$. Using this in \cref{eqn-ineqn} with $n=ko(\s)+1$ and then applying fact (a), we thus obtain $$k o(\s)\nu(w_s) \leq \mu+x\i \varpi^\vee+ko(\s) \mu^\diamond-k\Xi\sum\limits_{j=0}^{o_{\text{tr}}(\s)-1}\wt(\z\s_0)^i(x),(\z\s_0)^{i+1}(x)).$$
This gives $$\nu_w=\nu(w_s) \leq \frac{1}{ko(\s)}(\mu+x\i \varpi^\vee)+ \mu^\diamond-\frac{1}{o_{\text{tr}}(\s)}\sum\limits_{j=0}^{o_{\text{tr}}(\s)-1}\wt(\z\s_0)^i(x),(\z\s_0)^{i+1}(x)).$$ Letting $k$ to be sufficiently large and noting that Newton points have bounded denominator, we obtain $\nu_w\leq  \mu^\diamond-\frac{1}{o_{\text{tr}}(\s)}\sum\limits_{j=0}^{o_{\text{tr}}(\s)-1}\wt(\z\s_0)^i(x),(\z\s_0)^{i+1}(x)).$ Hence by \cref{2rho}, we are done.

\end{proof}
\subsection{} To describe $\nu(b_{\max})=\max\limits_{x\in W}\nu_{t^{x\mu}}$, it suffices by \cref{cochar=} to tabulate 
\begin{equation}\label{cochar}
    \mu^\diamond - \nu(b_{\max})=\min\limits_{x \in W} \text{av}_{\s}(x),
\end{equation}
for the cases discussed in \cref{key-lemma}. Note that $(\z\s_0)^j(x))\i(\z\s_0)^{j+1}(x)=\s_0^j(x\i\z\s_0(x))$. By a combination of \cite[corollary 3.3]{Sad21} and \cite[lemma 3.1]{Sad21}, we have $\wt(\z\s_0)^j(x),(\z\s_0)^{j+1}(x)) \leq \wt(\s_0^j(x\i\z\s_0(x)),1)$, whence $$\mu^\diamond - \nu(b_{\max}) \leq \min\limits_{x \in W}  \frac{1}{o_{\text{tr}}(\s)}\sum\limits_{j=0}^{o_{\text{tr}}(\s)-1} \wt(\s_0^j(x\i\z\s_0(x)),1).$$ Thus $\mu^\diamond = \nu(b_{\max})$ whenever $1 \in \CO_{\min,\s_0}(\z)$. One can read off such cases from the list given in the proof of \cref{key-lemma}; those cases can also be directly seen to be corresponding to quasi-split groups. For the remaining cases, the proof of \cref{thm} shows that the minimum in \cref{cochar} is realized at some/any element $x\in W$ such that $x\i \z \s_0(x)$ is a partial Coxeter element as described in \cref{key-lemma}. This way, one can evaluate $\text{av}_{\s}(x)$ at such an element $x$ to get the answer.

Alternatively, we can describe this minimum by applying the criterion in \cref{b-max}. Let us call $\Xi_\s=\min\limits_{x \in W} \text{av}_{\s}(x)=\mu^\diamond - \nu(b_{\max})$. Note that, when $\sigma=\text{Ad}(\tau_i)$, the condition in \cref{b-max} translates to $$\Xi_\s=\min\{\xi:
    \<\varpi_i^\vee+\xi, \varpi_j\rangle \in \mathbb{Z}, \langle \xi, \varpi_j \rangle \geq 0, \text{ for all }j\}.$$
Thus, we can simply use expressions of fundamental coweights in terms of simple coroots coming from the inverse of the Cartan matrix and find this quantity. For a real number $m$, we set $\{m\}=m-\lfloor m \rfloor$.
\begin{enumerate}
    \item Type $A_n$: For $1 \leq i \leq n$, we have $\varpi_i^\vee=\sum\limits_{j=1}^n a_{ij}\a_j^\vee$, where
    \begin{equation}\label{cowt}
        a_{ij}=\min(i,j)-\frac{ij}{n+1}=\min(i,j)-\lfloor \frac{ij}{n+1} \rfloor - \{\frac{ij}{n+1}\}.
    \end{equation}
    Hence, $\Xi_\s=\sum\limits_{j=1}^n\{\frac{ij}{n+1}\}\a_j^\vee$.
    \item Type $B_n$: Here $\varpi_1^\vee=\sum\limits_{j=1}^{n-1} \a_j^\vee+\frac{1}{2}\a_n^\vee$, whence $\Xi_\s=\frac{1}{2}\a_n^\vee$.
    \item Type $C_n$: Here $\varpi_n^\vee=\sum\limits_{j=1}^n \frac{j}{2}\a_j^\vee$, whence $\Xi_\s=\sum\limits_{j \text{ odd}, 1\leq j \leq n } \frac{1}{2}\a_j^\vee$.
    \item Type $D_n$: Here the relevant coweights are $\varpi_1^\vee$ and $\varpi_n^\vee$. 
    
    We have $\varpi_1^\vee=\sum\limits_{j=1}^{n-2} \a_j^\vee+\frac{1}{2}(\a_{n-1}^\vee+\a_n^\vee)$, thus $\Xi_\s=\frac{1}{2}(\a_{n-1}^\vee+\a_n^\vee)$. 
    
    On the other hand, $\varpi_n^\vee=\sum\limits_{j=1}^{n-2}\frac{j}{2}\a_j^\vee+\frac{n-2}{4}\a_{n-1}^\vee+\frac{n}{4}\a_n^\vee$, whence 
    \begin{equation*}
        \Xi_\s=
    \begin{cases}
       & \sum\limits_{j \text{ odd}, 1\leq j \leq n-2 }\frac{1}{2}\a_j^\vee+(1-\{\frac{n-2}{4}\})\a_{n-1}^\vee+(1-\{\frac{n}{4}\})\a_n^\vee,\text{ if $n$ is odd;}\\
       & \sum\limits_{j \text{ odd}, 1\leq j \leq n-2}\frac{1}{2}\a_j^\vee+(1-\{\frac{n-2}{4}\})\a_{n-1}^\vee ,\text{ if $n$ is congruent to $0$ mod $4$;}\\
       & \sum\limits_{j \text{ odd}, 1\leq j \leq n-2}\frac{1}{2}\a_j^\vee+(1-\{\frac{n}{4}\})\a_n^\vee,\text{ if $n$ is congruent to $2$ mod $4$.}
    \end{cases}   
    \end{equation*}
    \item Type $E_6$: Here $\varpi_1^\vee=\frac{1}{3}(4\a_1^\vee+3\a_2^\vee+5\a_3^\vee+6\a_4^\vee+4\a_5^\vee+2\a_6^\vee)$, thus $\Xi_\s=\frac{1}{3}(2\a_1^\vee+\a_3^\vee+2\a_5^\vee+4\a_6^\vee)$.
    \item Type $E_7$: Here $\varpi_7^\vee=\frac{1}{2}(2\a_1^\vee+3\a_2^\vee+4\a_3^\vee+6\a_4^\vee+5\a_5^\vee+4\a_6^\vee+3\a_7^\vee)$, thus $\Xi_\s=\frac{1}{2}(\a_2^\vee+\a_5^\vee+\a_7^\vee)$.
\end{enumerate}

Finally, we have the following remaining cases where $\s_0$ is non-trivial: 
\begin{enumerate}
    \item Type $^2 A_n$: As per our discussion above, we can assume that both $n,i$ are odd. It is easy to see using \cref{cowt} that $\varpi_j+\varpi_{n+1-j} \in \BZ_{\geq 0}\D$ for all $1\leq j < \frac{n+1}{2}$. Then by \cref{b-max}, $\Xi_\s=\min\{\xi:
    \<\varpi_i^\vee+\xi, \varpi_{\frac{
    n+1}{2}}\rangle \in \mathbb{Z}, \langle \xi, \varpi_{\frac{n+1}{2}} \rangle \geq 0, \s_0(\xi)=\xi\}$, whence again by \cref{cowt} applied to $j=\frac{n+1}{2}$ we get $\Xi_\s=\frac{1}{2}\a_{\frac{n+1}{2}}^\vee$.
    \item Type $^2 D_n$: As per our discussion above, we only need to deal with the case when $\t=\t_n$. 
    Considerations similar to the case of type $D_n$ shows that if $\Xi_\s=\sum\limits_{j=1}^n x_j \a_j^\vee$, then $x_j=\frac{1}{2}$ for $1\leq j \leq n-2$; to find the rest of the coefficients, we have to use the conditions: $$\s_0(\Xi_\s)=\Xi_\s , \<\Xi_\s, \varpi_{n-1}+\varpi_n\>\geq 0, \<\varpi_n^\vee+\Xi_\s, \varpi_{n-1}+\varpi_n\> \in \BZ.$$
    This in turn yields $x_{n-1}=x_n, x_{n-1}+x_n \geq 0 $ and $x_{n-1}+x_n+\frac{2n-2}{4} \in \BZ$. From this, we deduce
    \begin{equation*}
        \Xi_\s=
    \begin{cases}
       & \sum\limits_{j \text{ odd}, 1\leq j \leq n-2 }\frac{1}{2}\a_j^\vee,\text{ if $n$ is odd;}\\
       & \sum\limits_{j \text{ odd}, 1\leq j \leq n-2}\frac{1}{2}\a_j^\vee+\frac{1}{4}(\a_{n-1}^\vee+\a_n^\vee),\text{ if $n$ is even.}
    \end{cases}   
    \end{equation*}
    
\end{enumerate}
\printbibliography[heading=bibintoc]

\end{document}